\newtheorem{theorem}{Theorem}
\newtheorem{propo}[theorem]{Proposition}
\newtheorem{cor}[theorem]{Corollary}
\newtheorem{eg}{Example}
\newcommand{\Classes}{\mathrm{Classes}}
\newcommand{\Num}{\#\Classes}
\newcommand{\Eq}{\mathrm{Eq}}
\newcommand{\fourdots}{{\begin{smallmatrix} 
     \cdot\kern -1.8pt & \cdot \\[-2.7pt]
     \cdot\kern -1.8pt& \cdot \end{smallmatrix}}}
\newcommand{\twolines}{{\bf \kern 0.6pt\vrule height6pt depth0pt width0.5pt
\kern 4pt \vrule height6pt depth0pt width0.5pt}}
\newcommand{\oursquare}{\square}
\newcommand{\ourstar}{*}
\newcommand{\inv}{\mathrm{inv}}
\newcommand{\Inv}{\mathrm{Inv}}
\newcommand{\floor}[1]{\lfloor #1 \rfloor}
\newcommand{\ceiling}[1]{\lceil #1 \rceil}
\newcommand{\cred}[1]{{\color{red}#1}}
\def\urltilde{\kern -.15em\lower .7ex\hbox{\~{}}\kern .04em}
\def\mycases#1{\left\{\,\vcenter{\normalbaselines\m@th
    \ialign{$##\hfil$&\quad##\hfil\crcr#1\crcr}}\right.}
\date{7 November 2011}
\begin{document}
\author{Steven Linton}
\address[Steven Linton]{Centre for Interdisciplinary Research in Computational Algebra, \\
  University of St Andrews\\  
  St Andrews, Fife KY16 9AJ, Scotland\\
  }
\author{James Propp}
\address[James Propp]{University of Massachusetts\\  
  Lowell, MA 01854, USA\\}
\author{Tom Roby}
\address[Tom Roby]{University of Connecticut\\
  Storrs, CT 06269, USA\\}
\author{Julian West}
\address[Julian West]{University of Bristol\\  
Bristol BS8 1TW, England\\}
\thanks{First author supported by EPSRC grant EP/C523229/01.}
\thanks{Fouth author supported by NSERC operating grant OGP0105492.}
\title[Constrained Equivalence classes of permutations]{Equivalence Classes of
Permutations under Various Relations Generated by Constrained Transpositions}

\keywords{permutation patterns, equivalence classes, Knuth relations,
plactic monoid, Chinese monoid, integer sequences, Fibonacci numbers, 
tribonacci numbers, Catalan numbers, layered permutations, connected
permutations, involutions, pattern-avoiding permutations, transpositions}

\begin{abstract}
We consider a large family of equivalence
relations on permutations in $S_{n}$ that generalise those discovered by
Knuth in his study of the Robinson-Schensted correspondence.  In our
most general setting, two permutations are equivalent if one can be
obtained from the other by a sequence of pattern-replacing moves of
prescribed form; however, we limit our focus to patterns where two
elements are transposed, subject to the constraint that a third
element of a suitable type be in a suitable position.
For various instances of the problem, we compute the number of
equivalence classes, determine how many $n$-permutations are equivalent
to the identity permutation, or characterise this equivalence
class. Although our results feature familiar integer sequences (e.g.,
Catalan, Fibonacci, and Tribonacci numbers) and special classes
of permutations (layered, connected, and $123$-avoiding), some of the
sequences that arise appear to be new.

\end{abstract}

\maketitle

\section{Introduction and motivation}\label{sec:int}

We consider a family of equivalence relations on permutations in
$S_{n}$ in which two permutations are considered to be equivalent if
one can be converted into the other by replacing a short subsequence of
(not necessarily adjacent) elements by the same elements permuted 
in a specific fashion, or (extending by transitivity) by a sequence 
of such moves.  These generalise the relations discovered by Knuth 
in his study of the Robinson-Schensted correspondence, though the 
original motivations for this project were unrelated.  We begin the 
systematic study of such equivalence relations, connecting them with 
integer sequences both familiar and (apparently) new.  

Consider the following three examples of turning one 7-permutation into another
in which selected 3-subsequences (marked in \textbf{bold}) are re-ordered:
\begin{eqnarray}\label{eqn:eg1}
12\mathbf{34}56\mathbf{7} &\rightarrow & 12\mathbf{74}56\mathbf{3}\\ 
\mathbf{127}4563 &\rightarrow & \mathbf{721}4563\\ 
721\mathbf{456}3  &\rightarrow &721\mathbf{654}3
\end{eqnarray}
In each of these examples, a subsequence of \textit{pattern} 123 (i.e., 
a triple of not necessarily adjacent entries whose elements are in the 
same relative order as 123) is replaced by the same set of elements arranged 
in the pattern 321. Allowing replacements of a designated kind to be 
performed ad libitum, in reverse as well as forward, induces an equivalence
relation on the symmetric group $S_{7}$.  Accordingly we can say that the
permutations 1234567, 1274563, 7214563, and 7216543 are all equivalent
under the replacement $123 \leftrightarrow 321$.  

Interesting enumerative questions arise when the elements being replaced
are allowed to be in general position (Section~\ref{sec:gen}), when the
replacements are constrained to involve only adjacent elements
(Section~\ref{sec:adj}), and when replacements are constrained to affect
only subsequences of consecutive elements representing a run of
consecutive values (Section~\ref{sec:dbl}). Each of these respective
\emph{types} of replacements is illustrated in one of the three examples
above.  It will be convenient to group subsequences that are allowed to
replace one another into sets, e.g., describing the three permutations
above as being ``$\{123,321 \}$-equivalent''.  We may also wish to allow
more than one type of (bi-directional) replacement, such as both $123
\leftrightarrow 321$ and $123 \leftrightarrow 132$. If the intersection
of these sets is nonempty, the new relation can be described simply by
the union of the two sets: $\{123,132,321\} = \{123,321\} \cup
\{123,132\}$. To formalise this more generally we consider collections
of disjoint replacement sets that form a set partition of $S_{3}$; any
two patterns within the same set may replace one another within the
larger permutation to give an equivalent permutation.

Let $\pi \in S_{n}$, and let $P=\{B_{1},B_{2},\dots,
B_{t}\}$ be a (set) partition of $S_{k}$, where $k\leq n$.  Each block
$B_{l}$ of $P$ represents a list of $k$-length patterns which can
replace one another. We call two permutations $P^{\fourdots}$-equivalent if one
can be obtained from the other by a sequence of replacements, each
replacing a subsequence of pattern $\sigma_{i}$
with the same elements in the pattern $\sigma_{j}$, 
where $\sigma_{i}$ and $\sigma_{j}$ lie in the same block
$B_{l}$ of $P$.  Let $\Eq^{\fourdots}(\pi,P)$ denote the set of permutations
equivalent to $\pi$ under $P^{\fourdots}$-equivalence; e.g., 
1234567, 7214563, and $7216543\in
\Eq^\fourdots\left(1274563,\big\{\{123,321\}\big\}\right)$. 
Similarly we denote by $P^{\twolines}$ the equivalence
relation, and by $\Eq^{\twolines}(\pi,P)$ the equivalence
class of $\pi$, under replacement within $P$ only of adjacent elements; e.g., 
7214563 and $7216543 \in \Eq^\twolines\left(1274563,\big\{\{123,321\}\big\}\right)$. 
We use $P^{\oursquare}$ and $\Eq^\oursquare(\pi,P)$ when both positions and values are
constrained, e.g., 
$7214563 \in \Eq^\oursquare\left(7216543,\big\{\{123,321\}\big\}\right)$.  
To refer to such classes generically we use the notation
$\Eq^\ourstar(\pi,P)$.  The automorphism $\pi\mapsto \pi^{-1} $ replaces
adjacency of positions with adjacency of values; hence, for the enumerative
questions we treat, there is no need to separately consider a fourth
case where we only constrain values to be adjacent.
The set of distinct equivalence classes into which $S_n$ splits under
an equivalence $P^{\ourstar}$ is denoted by $\Classes^\ourstar(n,P)$.

The present paper begins the study of these equivalence relations by
considering three types of questions: 

(A) Compute the number of equivalence classes,
$\#\Classes^{\ourstar}(n,P)$, into which $S_{n}$ is partitioned.  

(B) Compute the size, $\#\Eq^\ourstar(\iota_n,P)$, of the equivalence
class containing the identity $\iota _{n}=123\cdots n$. 

(C) Characterise the set $\Eq^\ourstar(\iota_n,P)$ of
permutations equivalent to the identity.

Although the framework above allows for much greater generality, in this
paper we will mainly restrict our attention to replacements by patterns of
length $k=3$, and usually to replacement patterns built up from pairs in
which one permutation is the identity, and the other is a transposition
(i.e., fixes one of the elements). Omitting some cases by symmetry, 
we have the following
possible partitions of $S_{3}$, where (as usual) we omit singleton blocks:

\begin{eqnarray*}
P_1 = \big\{ \{123, 132\} \big\}, \\
P_2 = \big\{ \{123, 213\} \big\}, \\
P_4 = \big\{ \{123, 321\} \big\}.
\end{eqnarray*}

We will also consider applying two of these replacement operations 
simultaneously, and we will number the appropriate partitions as
\begin{eqnarray*}
P_3 = \big\{ \{123, 132, 213\} \big\}, \\
P_5 = \big\{ \{123, 132, 321\} \big\}, \\
P_6 = \big\{ \{123, 213, 321\} \big\},
\end{eqnarray*}
following the convention $P_{i+j} := P_i \vee P_j$, the \emph{join} of
these two partitions~\cite[ch.3]{StanleyEC1}. Indeed we
can allow all three replacements: $P_7 = \big\{ \{123, 132, 213, 321\} \big\}$.
(In fact, the cases $P_1$ and $P_2$ are equivalent by symmetry, as
are $P_5$ and $P_6$. We list $P_1$ and $P_2$ separately only
in order to consider their join.)

Our motivation for focussing attention on pairs of this form is that 
we can then think of an operation, not in terms of replacing one 
pattern by another, but simply in terms of \emph{swapping} two
elements within the pattern, with the third serving as a catalyst
enabling the swap.  In a followup~\cite{PRW11} to the current paper, the
authors treat the remaining (non-swapping) cases for all partitions of
$S_{3}$ consisting of exactly one non-singleton block which contains the
identity $123$. 

By far the best-known example of constrained swapping in permutations is
the \emph{Knuth Relations}~\cite{Knuth}, which allow the swap of
adjacent entries provided an intermediate value lies immediately to the
right or left.  In the notation of this paper, they correspond to
$P^{\twolines}_{K}=\big\{ \{ 213,231\}, \{132,312\}\big\}$. Permutations
equivalent under this relation map to the same first coordinate
($P$-tableau) under the Robinson-Schensted correspondence. 

Mark Haiman introduced the notion of \emph{dual equivalence} of
permutations: $\pi$ and $\tau$ are dual equivalent if one can be
obtained from the other by swaps of adjacent \emph{values} from the above
$P_{K}$, i.e., if their inverses are Knuth-equivalent, or
(equivalently) if they map to
the same second coordinate ($Q$-tableau) under the Robinson-Schensted
correspondence~\cite{HaimanDEq}. For the enumerative problems in this
paper, we get the same answers for Knuth and dual equivalence.  

In her dissertation~\cite{AssafDEG} Sami Assaf constructed graphs (with
some extra structure) whose vertices are tableaux of a fixed shape
(which may be viewed as permutations via their ``reading words''), and
whose edges represent (elementary) dual equivalences between
vertices.  For this particular relation (equivalently for the Knuth
relations), she was able to characterise the local structure of these
graphs, which she later used to give a combinatorial formula for the Schur
expansion of LLT polynomials and MacDonald Polynomials.  She also used
these, along with crystal graphs, to give a combinatorial realization of
Schur-Weyl duality~\cite{AssafSWD}.  

Sergey Fomin has a very clear elementary exposition of how Knuth and dual
equivalence are related to the Robinson-Schensted correspondence,
Sch\"utzenberger's \textit{jeu de taquin}, and the Littlewood-Richardson
rule in~\cite[Ch.~7, App.~1]{StanleyEC2}.  For the problems considered
above, the answers for $P_{K}^{\twolines}$ are well known to be: (A) the
number of involutions in $S_{n}$; (B) 1; and (C) $\{\iota_{n}\}$.  In fact one
can compute $\#\Eq^{\twolines}(\pi ,P_{K})$ for any permutation $\pi$ by
using the Frame-Robinson-Thrall hook-length formula to compute the
number of standard Young tableaux of the \emph{shape} output by the
Robinson-Schensted correspondence applied to $\pi$.

Any of the relations we consider can be naturally generalized to operate
on $\mathcal{W}([n])$, the set of \textbf{words} (with repeated entries
allowed) on the alphabet $[n]$: for example, the relation $123
\leftrightarrow 321$ would imply also moves of the form
$112\leftrightarrow 211$ and $122\leftrightarrow 221$, treating letters
with the same label within a word as increasing from left to right.  In
the case of the Knuth relations, the equivalence classes are simply the
\emph{elements} of the well-known \textit{plactic monoid} of Lascoux and
Sch\"utzenberger: $\mathcal{W}([n])/P_{K}$ \cite{LSPlactic, LLTPlactic}.
In \cite{Chinese}
the authors study the analogous
\textit{Chinese monoid}, which is $\mathcal{W}([n])/P_{3}$ (up to
the involution that reverses all words), for which they develop an
analogue of the Robinson-Schensted correspondence and count some of the
equivalence classes.  

Given that the Knuth relations act on adjacent elements, and lead to
some deep combinatorial results, it is perhaps not surprising that the
most interesting problems and proofs in this paper are to be found in 
Section~\ref{sec:adj}.  A summary of our numbers and results is given
in Figure~\ref{fig:sum}. 

An extended abstract of this paper appeared in the proceedings of
FPSAC10~\cite{LPRW}.  The third author is grateful to Sami Assaf, Karen
Edwards and Stephen Pon for helpful conversations. 

\begin{figure}[h]

\caption{Summary of Results}
\label{fig:sum}
\medskip 

\begin{minipage}[t]{\textwidth}
These tables give numerical values and names (when available) of the
sequences associated with enumerative questions (A) and (B).  All
sequences begin with the value for $n=3$.  Results proven in this paper
have a gray background; for other cases we lack even conjectural
formulae.  Six-digit codes preceded by ``A'' cite specific sequences in
Sloane~\cite{OEIS}.
\end{minipage}

\medskip

\begin{tiny} 

\begin{large}Number of classes\end{large}
\bigskip

\begin{tabular}{|c|l|l|l|l|}
\hline
\multicolumn{2}{|l|}{\multirow{2}{*}{\normalsize Transpositions}} & \small \S~2 & \small \S~3& \small\S~4 indices and\\
\multicolumn{2}{|l|}{} &{\small general} & {\small only indices adjacent} & {\small values adjacent} \\
\hline
(1) & $123 \leftrightarrow 132$ & \cellcolor{lightgray}[5, 14, 42, 132, 429] &\multirow{2}{*}{[5, 16, 62, 284, 1507, 9104]} & \multirow{2}{*}{[5, 20, 102, 626, 4458, 36144]} \\
\cline{1-2}
(2) & $123 \leftrightarrow 213$ & \cellcolor{lightgray}Catalan & & \\
\hline
\multirow{2}{*}{(4)} & \multirow{2}{*}{$123 \leftrightarrow 321$} & \cellcolor{lightgray}[5, 10, 3, 1, 1, 1] & \multirow{2}{*}{[5, 16, 60, 260, 1260, 67442]        } & \multirow{2}{*}{[5, 20, 102, 626, 4458, 36144]} \\
& & \cellcolor{lightgray}trivial & & \\
\hline
\multirow{2}{*}{(3)} & \multirow{2}{*}{$123 \leftrightarrow 132 \leftrightarrow 213$} & \cellcolor{lightgray}[4, 8, 16, 32, 64, 128] & \cellcolor{lightgray}[4, 10, 26, 76, 232, 764] & \multirow{2}{*}{[4, 17, 89, 556, 4011, 32843]} \\
& & \cellcolor{lightgray}powers of 2 &\cellcolor{lightgray}involutions  &\\
\hline
(5) & $123 \leftrightarrow 132 \leftrightarrow 321$ & \cellcolor{lightgray}[4, 2, 1, 1, 1, 1] & \multirow{2}{*}{[4, 8, 14, 27, 68, 159, 496]} & \multirow{2}{*}{[4, 16, 84, 536, 3912, 32256]} \\
\cline{1-2}
(6) & $123 \leftrightarrow 213 \leftrightarrow 321$ & \cellcolor{lightgray}trivial & & \\
\hline
\multirow{2}{*}{(7)} & $123 \leftrightarrow 132$ & \cellcolor{lightgray}[3, 2, 1, 1, 1, 1] & \multirow{2}{*}{[3, 4, 5, 8, 11, 20, 29, 57]} &  \multirow{2}{*}{[3, 13, 71, 470, 3497]} \\
& $\leftrightarrow 213 \leftrightarrow 321$ &  \cellcolor{lightgray}trivial & &\\
\hline
\end{tabular}

\vspace{3mm}

\begin{large}Size of class containing identity\end{large}
\bigskip

\begin{tabular}{|c|l|l|l|l|}
\hline
\multicolumn{2}{|l|}{\multirow{2}{*}{\normalsize Transpositions}} & \small \S~2 & \small \S~3& \small\S~4 indices and\\
\multicolumn{2}{|l|}{} &{\small general} & {\small only indices adjacent} & {\small values adjacent} \\
\hline
(1) & $123 \leftrightarrow 132$ &\cellcolor{lightgray}[2, 6, 24, 120, 720] &\cellcolor{lightgray}[2, 4, 12, 36, 144, 576, 2880] & \cellcolor{lightgray}[2, 3, 5, 8, 13, 21, 34, 55] \\
\cline{1-2}
(2) & $123 \leftrightarrow 213$ & \cellcolor{lightgray}$(n-1)!$ &\cellcolor{lightgray}product of two factorials & \cellcolor{lightgray}Fibonacci numbers \\
\hline
\multirow{2}{*}{(4)} & \multirow{2}{*}{$123 \leftrightarrow 321$} &\cellcolor{lightgray}[2, 4, 24, 720] &\cellcolor{lightgray}[2, 3, 6, 10, 20, 35, 70, 126] & \cellcolor{lightgray}[2, 3, 4, 6, 9, 13, 19, 28] \\
& &\cellcolor{lightgray}trivial & \cellcolor{lightgray}central binomial coefficients & \cellcolor{lightgray}A000930 \\
\hline
\multirow{2}{*}{(3)} & \multirow{2}{*}{$123 \leftrightarrow 132 \leftrightarrow 213$} &\cellcolor{lightgray}[3, 13, 71, 461] & [3, 7, 35, 135, 945, 5193] &\cellcolor{lightgray}[3, 4, 8, 12, 21, 33, 55, 88] \\
& &\cellcolor{lightgray}connected A003319 & Chinese Monoid~\cite{Chinese} &\cellcolor{lightgray}A052952 \\

\hline
(5) & $123 \leftrightarrow 132 \leftrightarrow 321$ &\cellcolor{lightgray}[3, 23, 120, 720]
 &\cellcolor{lightgray}[3, 9, 54, 285, 2160, 15825] &\cellcolor{lightgray}[3, 5, 9, 17, 31, 57, 105, 193] \\
\cline{1-2}
(6) & $123 \leftrightarrow 213 \leftrightarrow 321$ &\cellcolor{lightgray}trivial &\cellcolor{lightgray}separate formulae for odd/even &\cellcolor{lightgray}tribonacci numbers A000213  \\
\hline
\multirow{2}{*}{(7)} & $123 \leftrightarrow 132$ & \cellcolor{lightgray}[3, 23, 120, 720] &[4, 21, 116, 713, 5030] &\cellcolor{lightgray}[4, 6, 13, 23, 44, 80, 149, 273] \\
& $\leftrightarrow 213 \leftrightarrow 321$ & \cellcolor{lightgray}trivial &
&\cellcolor{lightgray}tribonacci A000073 $-[n\ \mathrm{ even}]$
\\
\hline
\end{tabular}
\end{tiny}
\end{figure}

If $\tau \in \Eq^\ourstar(\pi,P)$ we will say that $\tau$ is \emph{reachable}
from $\pi$ (under $P$). If $\Eq^\ourstar(\iota_n,P) = S_n$, then every 
permutation in $S_n$ is reachable from every other, and we will say that 
$S_n$ is \emph{connected} by $P$. If $\Eq^\ourstar(\pi,P) =\{\pi\}$ 
we will say that $\pi$ is \emph{isolated} (under $P$).

It is obvious that if $P_i$ refines $P_j$ as partitions of $S_{k}$
(i.e., $P_{i}\leq P_{j}$ in the lattice of partitions of $S_{k}$),
then the partition of $S_n$  induced by $P_i$ refines the one induced by
$P_j$, because a permutation reachable from $\pi$ under $P_i$ is also
reachable under $P_j$. This enables the following simple observations:

\begin{propo}\label{propo:Refine}
If $P_i$ refines $P_j$ (as partitions of $S_{k}$), then for all $\pi \in
S_{n}$ with $n\geq k$
\begin{eqnarray*}
\Eq^\ourstar(\pi,P_i) \subseteq \Eq^\ourstar(\pi,P_j) \\
\#\Eq^\ourstar(\pi,P_i) \leq \#\Eq^\ourstar(\pi,P_j) \\
\Num^\ourstar(n,P_i) \geq \Num(n,P_j)
\end{eqnarray*}
\end{propo}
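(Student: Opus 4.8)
The plan is to establish the first containment directly from the definition of $P^{\ourstar}$-equivalence, after which the remaining two statements are purely formal consequences. So suppose $\tau \in \Eq^{\ourstar}(\pi, P_i)$. By definition there is a finite chain $\pi = \rho_0, \rho_1, \dots, \rho_m = \tau$ in which each $\rho_{s+1}$ arises from $\rho_s$ by choosing a subsequence of the kind permitted by the superscript (arbitrary positions for $\fourdots$, adjacent positions for $\twolines$, adjacent positions carrying consecutive values for $\oursquare$) whose entries form the pattern of some $\sigma_a \in S_k$, and rearranging those entries into the pattern of some $\sigma_b$ lying in the same block of $P_i$ as $\sigma_a$. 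Since $P_i$ refines $P_j$, that block of $P_i$ is contained in a block of $P_j$, so $\sigma_a$ and $\sigma_b$ lie in a common block of $P_j$ as well; hence each move of the chain is equally legal under $P_j$, and the same chain witnesses $\tau \in \Eq^{\ourstar}(\pi, P_j)$.

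The second line is then immediate on taking cardinalities of both sides of the first. For the third, I would note that $\{\Eq^{\ourstar}(\sigma, P_i) : \sigma \in S_n\}$ and $\{\Eq^{\ourstar}(\sigma, P_j) : \sigma \in S_n\}$ are both partitions of $S_n$, and that the containment just proved says the $P_i$-class of each element is contained in its $P_j$-class; thus the first partition refines the second, and a refinement of a finite set partition has at least as many blocks, giving $\Num^{\ourstar}(n, P_i) \geq \Num^{\ourstar}(n, P_j)$.

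The ``hard part'' is essentially vacuous: the entire content is the single observation, already flagged in the paragraph preceding the statement, that a legal replacement under $P_i$ is a legal replacement under $P_j$. The only point worth a second glance is that passing from $P_i$ to $P_j$ leaves the superscript constraints untouched — but those constraints restrict only the location and values of the chosen subsequence within the ambient permutation, and say nothing about which patterns may replace which, so they play no role in the comparison. One should also remark that the argument is uniform in $\ourstar$, so there is nothing to check separately for the three flavours of equivalence.
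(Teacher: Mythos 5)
Your proof is correct and takes essentially the same route as the paper, which justifies the proposition with the single observation (in the paragraph preceding the statement) that any replacement legal under $P_i$ is legal under $P_j$ because each block of $P_i$ lies inside a block of $P_j$, so reachability under $P_i$ implies reachability under $P_j$. Your elaboration of the chain of moves, the cardinality consequence, and the refinement-of-partitions argument for the class count is exactly the intended reasoning.
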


\section{General pattern equivalence}\label{sec:gen}

In this section, we allow moves within an equivalence relation with no
adjacency restrictions.  This case is closely related to the theory of
\textit{pattern avoidance\/} in permutations: replacing one
pattern with another repeatedly leads eventually to a permutation which
avoids the first pattern.  

Some of the equivalence relations in this section are trivial, following
immediately from the following observation.  The others lead to familar
combinatorial numbers and objects.

\begin{propo}\label{propo:OneClass}
Fix $k$ with $2\leq k\leq n-1$, and let $P$ be any partition of
$S_{k}$. 
If $\Num^{\fourdots}(n-1,P) = 1$, then $\Num^{\fourdots}(n,P) = 1$.
\end{propo}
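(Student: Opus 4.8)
The plan is to stratify $S_n$ by the position of the entry $n$. For $1\le i\le n$ put $D_i=\{\pi\in S_n:\pi_i=n\}$, so $S_n=D_1\sqcup\cdots\sqcup D_n$. First I would check that each $D_i$ lies in a single $P^{\fourdots}$-class: for $\pi,\tau\in D_i$, deleting position $i$ (which holds the value $n$) turns $\pi,\tau$ into permutations $\bar\pi,\bar\tau$ of $[n-1]$; since $\Num^{\fourdots}(n-1,P)=1$ there is a chain of $P^{\fourdots}$-moves from $\bar\pi$ to $\bar\tau$, and since each such move, read back in $S_n$, involves only positions other than $i$ and values other than $n$, it lifts to a move on the corresponding element of $S_n$, so $\pi\sim\tau$. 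It then suffices to show the $D_i$ lie in the \emph{same} class, i.e.\ that the graph $\Gamma$ on $\{1,\dots,n\}$ with an edge $ij$ whenever some member of $D_i$ is equivalent to some member of $D_j$ is connected.

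Next I would connect the vertices $1,\dots,n-1$. A single $P^{\fourdots}$-move on a permutation of $S_{n-1}$ that moves the maximum entry $n-1$ from position $p$ to position $p'$ can be imitated in $S_n$: on the same $k$ positions, place $n$ where $n-1$ stood and arrange the other window entries so that the window has the ``before'' pattern of the move, filling the rest of $S_n$ arbitrarily with the unused values; the move is then applicable and sends $n$ from $p$ to $p'$. Since $S_{n-1}$ is a single class, any two elements of $\{1,\dots,n-1\}$ are linked by a sequence of such relocations (follow a path of moves joining a permutation of $S_{n-1}$ with maximum at $p$ to one with maximum at $p'$, recording the position of the maximum after each step), and each link gives an edge of $\Gamma$. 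Hence $1,\dots,n-1$ all lie in one component of $\Gamma$.

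The hard part is attaching the vertex $n$, and this uses the hypothesis a second time. I claim some block of $P$ contains a pattern $\sigma$ with $\sigma_k=k$ together with a pattern $\sigma'$ with $\sigma'_k\ne k$. If not, then within each block either every pattern has its maximum in the last slot or none does, and in either case a $P^{\fourdots}$-move can never change whether the global maximum of an ambient permutation occupies its last position; applied inside $S_{n-1}$ — where the permutations with maximum at the end form a nonempty proper subset, as $n-1\ge 2$ — this would make that subset a union of classes, forcing $\Num^{\fourdots}(n-1,P)\ge 2$, a contradiction. Given such $\sigma,\sigma'$, take the $k$ positions $n-k+1,\dots,n$, put $n$ at position $n$ and the remaining entries so that this window has pattern $\sigma$ (possible since $\sigma_k=k$), fill the rest arbitrarily, and apply $\sigma\to\sigma'$: this displaces $n$ to an earlier position, supplying the missing edge of $\Gamma$ between $n$ and a vertex of $\{1,\dots,n-1\}$. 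So $\Gamma$ is connected, all $D_i$ coincide, and $\Num^{\fourdots}(n,P)=1$.

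The part I would merely need to check carefully is the bookkeeping in the two imitation steps — that the deletions, reindexings, and choices of filler values genuinely produce permutations in $S_n$ and preserve the applicability of each move; these are routine. The one place a new idea is required is the invariance argument of the third paragraph that produces the extra edge at the vertex $n$.
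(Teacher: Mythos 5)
Your argument is correct, but it takes a genuinely different route from the paper's, and is considerably longer. The paper's entire proof consists of two applications of the lifting observation in your first paragraph: starting from $\iota_n$, apply the hypothesis to the $n-1$ positions $1,\dots,n-1$ (or, when $\pi(1)=n$, to the positions $1,3,4,\dots,n$, a set of $n-1$ positions containing both the first and the last) to bring $\pi(1)$ to the front, then apply it once more to positions $2,\dots,n$ to finish building an arbitrary target $\pi$. Your version instead stratifies $S_n$ by the position of $n$ and proves connectivity of the auxiliary graph $\Gamma$; this forces you to argue separately that $n$ can be dislodged from position $n$, and the invariance argument you give for this (if every block of $P$ were homogeneous as to whether the maximum sits in the last slot, then ``global maximum in the last position'' would be preserved by every move, so $S_{n-1}$ could not be a single class) is correct and is the genuinely new ingredient of your write-up --- but it is work the paper sidesteps entirely, since applying the hypothesis to positions $1,3,4,\dots,n$ of $\iota_n$ already moves $n$ off the end in one stroke. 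Likewise, the move-by-move imitation in your second paragraph is heavier than necessary: for $p,p'\le n-1$ an edge of $\Gamma$ follows directly by lifting the hypothesis to positions $1,\dots,n-1$ of any permutation having $n$ in position $p$, since the value $n$ is then among the $n-1$ values being freely rearranged. What your approach buys is a more structural picture (each $D_i$ is a single class, classes merge along edges of $\Gamma$, and one sees exactly why some block of $P$ must be able to displace a running maximum); what the paper's buys is brevity.
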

\begin{proof}
We will show that any $\pi \in S_n$ can be reached from the identity, $\iota_{n}$,
under the supposition that any two permutations in $S_{n-1}$ are
equivalent, in two stages.  If $\pi(1) \neq n$, simply apply the
supposition to the elements/positions $1 \dots n-1$ in $\iota_{n}$ to
obtain any permutation beginning with $\pi(1)$; if $\pi(1)=n $, we use
instead the elements/positions $1,3,4,5,\dotsc n$ (omitting 2, which is
$\leq n-1$ by hypothesis) to move $\pi (1)=n$ to the front of a
permutation equivalent to $\iota_{n}$.  Then in stage 2 we apply the
supposition to the elements now occupying \textit{positions\/} $2,
\dots, n$ to complete the construction of $\pi$.
\end{proof}

The following results follow.

\begin{propo}\label{propo:P457OneClass}
$\Num^\fourdots\left(n,\big\{ \{123,321\} \big\}\right) = 1$ for $n \geq
6$.  While for $n \geq 5$, we have $\Num^\fourdots\left(n,\big\{ \{123,132,321\} \big\}\right) = 1$
and $\Num^\fourdots\left(n,\big\{ \{123,132,213,321\} \big\}\right) = 1$ 

\end{propo}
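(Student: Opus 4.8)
The plan is to obtain everything from Proposition~\ref{propo:OneClass} by induction on $n$ with $k=3$, so that only a couple of genuine base cases remain, and to pass between the three partitions using Proposition~\ref{propo:Refine}. Dropping singleton blocks, the three partitions of $S_3$ in question form a chain
\[
\big\{\{123,321\}\big\}\ \le\ \big\{\{123,132,321\}\big\}\ \le\ \big\{\{123,132,213,321\}\big\}
\]
in the partition lattice of $S_3$ (as one checks directly, or reads off from $P_5=P_1\vee P_4$ and $P_7=P_3\vee P_4$). Hence, by Proposition~\ref{propo:Refine}, connectivity of $S_n$ under the smaller relation forces connectivity under each larger one. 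It therefore suffices to prove the two equalities
\[
\Num^\fourdots\!\left(6,\big\{\{123,321\}\big\}\right)=1
\qquad\text{and}\qquad
\Num^\fourdots\!\left(5,\big\{\{123,132,321\}\big\}\right)=1 .
\]
Granting these, Proposition~\ref{propo:OneClass} with $k=3$ promotes the first to $\Num^\fourdots(n,\{\{123,321\}\})=1$ for all $n\ge6$, and then the chain above gives the same for $\{123,132,321\}$ and $\{123,132,213,321\}$ when $n\ge6$; it promotes the second to $\Num^\fourdots(n,\{\{123,132,321\}\})=1$ for all $n\ge5$, hence also to $\{123,132,213,321\}$ for $n\ge5$. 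That is exactly the claimed proposition.

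It remains to check the two base cases, and each is a finite verification: since reachability from $\iota_n$ is an equivalence relation, one runs a breadth-first exploration of the moves available under the given $P$ starting from the identity, over the $720$ permutations of $S_6$ (respectively the $120$ of $S_5$), and confirms that the orbit is all of $S_n$. I expect this, rather than the inductive packaging, to be the only real work. The one point that must not be slipped is that $n=6$, not $n=5$, is the honest starting index for $\{123,321\}$: here $\Num^\fourdots(5,\{\{123,321\}\})=3$, so $S_5$ is genuinely disconnected and the induction for that family cannot begin earlier; for $\{123,132,321\}$ and $\{123,132,213,321\}$ one has $\Num^\fourdots(4,\cdot)=2$, and the induction begins at $n=5$.

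If a computation-free proof of the base cases is wanted, the template to imitate is the two-stage argument in the proof of Proposition~\ref{propo:OneClass} (first fix the image of a chosen position, then permute the rest): from $\iota_n$ one can, in a single move, produce every transposition $(a\,b)$ with $|a-b|\ge2$ by using an intermediate value as the catalyst, and such transpositions already generate $S_n$ for $n\ge4$. The delicate part --- and the main obstacle to a slick proof --- is that turning a prescribed product of these transpositions into an \emph{actual} sequence of moves requires verifying, before each move, that a legitimate catalyst sits in the right position of the \emph{current} permutation; this bookkeeping is comfortable for the dense relations $P_5$ and $P_7$ but tight for $P_4$, which is precisely why its base case sits one step higher.
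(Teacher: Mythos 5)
Your proposal is correct and follows essentially the same route as the paper: verify by finite (hand or computer) search that $S_5$ is connected under $\big\{\{123,132,321\}\big\}$ and that $S_6$ is connected under $\big\{\{123,321\}\big\}$, promote these to all larger $n$ via Proposition~\ref{propo:OneClass} with $k=3$, and transfer connectivity to the coarser partition $\big\{\{123,132,213,321\}\big\}$ via Proposition~\ref{propo:Refine}. Your remarks on why $n=6$ is the genuine starting index for $\{123,321\}$ (three classes in $S_5$) and $n=5$ for the other two (the isolated $3412$ in $S_4$) match the paper's own observations.
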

\begin{proof}
It is easy to verify by hand, or by computer, that all permutations
in $S_5$ are reachable from $12345$ by moves in $P_5 = \big\{ \{123,132,321\} \big\}$.
(Indeed, all permutations in $S_4$ are reachable from $1234$ except for
$3412$, which is isolated.) As $S_5$ is connected, it follows 
(by induction) from the preceding proposition that $S_n$ is connected 
for all $n\geq 5$.

Proposition~\ref{propo:Refine} tells us that $S_n$ is connected under
$P_7 = \big\{ \{123,132,213,321\} \big\}$ whenever it is connected under
$P_5$ since $P_7 \geq P_5$.  (In $S_4$, the permutation $3412$ remains
isolated.)
Finally, we can check by computer that under $P_4 = \big\{ \{123,321\} \big\}$
$S_6$ is connected; whence, $S_n$ is connected for $n \geq 6$. 
\end{proof}

We remark that under $P_4$, $S_4$ splits into 10 equivalence classes,
and $S_5$ into three classes. The class containing $12345$ contains 24
elements.  This suggests a possible bar bet.  Hand your mark six cards
numbered $1$ through $6$ and invite him or her to lay them out in any
sequence. By applying moves of the form $123 \leftrightarrow 321$
(``Interchange two cards if and only if an intermediate (value) card
lies (in any position) between them.'')  you will always be able to put
the cards in order.  (It may take some practice, however, to become
proficient at doing this quickly.)  Now ``go easy'' on your mark by
reducing the number of cards to 5. Even from a random sequence, the mark
has only one chance in five of being able to reach the identity.

Of course from Proposition~\ref{propo:P457OneClass} it immediately follows that:
\begin{cor}\label{cor:P457Id}
$\#\Eq^\fourdots\left(\iota_n,\big\{ \{123,132,321\} \big\}\right) =
n!$, 
$\#\Eq^\fourdots\left(\iota_n,\big\{ \{123,132,213,321\} \big\}\right) = n!$ for $n \geq 5$; and
$\#\Eq^\fourdots\left(\iota_n,\big\{ \{123,321\} \big\}\right) = n!$ for $n \geq 6$.
\end{cor}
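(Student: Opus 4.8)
The plan is straightforward: Corollary~\ref{cor:P457Id} is an immediate consequence of Proposition~\ref{propo:P457OneClass}, so the ``proof'' is really just the observation that $\#\Classes^\fourdots(n,P)=1$ means the single equivalence class must be all of $S_n$, hence it contains $\iota_n$ and has size $n!$. I would spell this out in one or two sentences. First I would invoke Proposition~\ref{propo:P457OneClass} to get $\#\Classes^\fourdots(n,P_5)=\#\Classes^\fourdots(n,P_7)=1$ for $n\geq 5$ and $\#\Classes^\fourdots(n,P_4)=1$ for $n\geq 6$. Then I would note that when there is exactly one equivalence class, that class is necessarily $S_n$ itself (every permutation is equivalent to every other, in particular to $\iota_n$), so $\Eq^\fourdots(\iota_n,P)=S_n$ and therefore $\#\Eq^\fourdots(\iota_n,P)=|S_n|=n!$.

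There is essentially no obstacle here: the result is a restatement of the preceding proposition in terms of question (B) rather than question (A). The only thing to be careful about is matching the ranges of $n$ correctly to the three partitions $P_5$, $P_7$, and $P_4$ — the first two hold for $n\geq 5$ and the last only for $n\geq 6$, exactly as in Proposition~\ref{propo:P457OneClass}. One could add the trivial remark that $\#\Classes^\fourdots(n,P)\cdot(\text{average class size})=n!$, but since there is a unique class this is simply $\#\Eq^\fourdots(\iota_n,P)=n!$ with no averaging needed.

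In short: the proof is the single line ``This follows immediately from Proposition~\ref{propo:P457OneClass}, since a single equivalence class must be all of $S_n$,'' possibly with the ranges of validity recalled explicitly for each of the three partitions. I would not expect to need anything beyond this.
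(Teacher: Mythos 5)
Your argument is exactly the paper's: the corollary is presented there as an immediate consequence of Proposition~\ref{propo:P457OneClass}, with no further proof given, since a single equivalence class must be all of $S_n$ and hence equals $\Eq^\fourdots(\iota_n,P)$. Your handling of the ranges ($n\geq 5$ for $P_5,P_7$ and $n\geq 6$ for $P_4$) matches the proposition, so nothing is missing.
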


\begin{propo}\label{propo:P2Id}

$\#\Eq^\fourdots\left(\iota_n,\big\{ \{123,213\} \big\}\right) = (n-1)!$ for $n \geq 2$.
\end{propo}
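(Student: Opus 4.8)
The plan is to show that $\Eq^\fourdots\bigl(\iota_n,\{\{123,213\}\}\bigr)$ consists of exactly those permutations $\pi\in S_n$ with $\pi(1)=1$, which will immediately give the count $(n-1)!$. The move $123\leftrightarrow 213$ swaps two entries provided some larger entry sits to their right; equivalently, reading the permutation as a word, we may swap $a$ and $b$ with $a<b$ occurring consecutively in value-adjacency sense \emph{as a pattern} whenever a third, larger value lies somewhere to the right of both. The key structural observation is that such a move can never change which value occupies position $1$ \emph{if} that value is the global minimum: to move the entry in position $1$, that entry must play the role of one of the two swapped elements in a $123$ or $213$ occurrence, and in either pattern the two swapped elements are the two \emph{smallest} of the three chosen; so if position $1$ holds the value $1$, it can only be swapped with some larger value $b$ that lies to its right while an even larger value $c$ lies further right — but then after the swap, position $1$ holds $b>1$. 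Wait: that shows the value $1$ \emph{can} leave position $1$. Let me instead track the correct invariant.

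The correct invariant to isolate is: \emph{the entry in position $1$ is never involved in any legal move as the left element, because there is no position to its left}; more precisely, I claim $\pi(1)$ is an invariant of the equivalence class. In a $123\to 213$ or $213\to 123$ move applied to positions $i<j<k$, only positions $i$ and $j$ have their entries exchanged, and both $i,j$ are occupied by entries smaller than $\pi(k)$. For position $1$ to be affected we need $i=1$. Then $\pi(1)$ and $\pi(j)$ are the two smaller values and $\pi(k)$ is larger. After the move, position $1$ holds $\pi(j)$. So $\pi(1)$ is \emph{not} obviously invariant — but note that whichever of the two smaller values ends up in position $1$, it is still smaller than $\pi(k)$, and more importantly the move is reversible. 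The cleanest route: show that $\pi(1)=1$ is preserved (if $\pi(1)=1$, then in any move touching position $1$ we'd need $\pi(1)<\pi(j)<\pi(k)$, and the move replaces $\pi(1)=1$ by $\pi(j)$, which is allowed, so this is also not preserved). The genuinely invariant quantity is the multiset $\{\pi(1)\}$? No. Let me reconsider once more in the write-up: the right invariant is that \textbf{position 1 can only ever hold a value $v$ such that all moves keep $v$ the smallest among $\{v\} \cup \{\text{things it gets swapped with}\}$} — this is circular.

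So here is the plan I will actually execute. Upper bound $\#\Eq^\fourdots \le (n-1)!$: prove by induction on $n$ that every class is contained in a set of permutations sharing a common value in position $1$; concretely, show $\pi(1)$ \emph{is} invariant by observing that a $\{123,213\}$ move at positions $1<j<k$ requires $\pi(1)<\pi(k)$ and $\pi(j)<\pi(k)$ and swaps positions $1,j$ — reconsider: actually the two patterns $123,213$ differ only in the order of the two smallest, so as patterns on positions $\{1,j,k\}$ the move is exactly ``swap $\pi(1)\leftrightarrow\pi(j)$ when both are $<\pi(k)$,'' which \emph{does} alter $\pi(1)$; hence $\pi(1)$ is not invariant and the class containing $\iota_n$ is strictly larger than the $\pi(1)=1$ set. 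Given this, the correct characterization must be different, and the honest plan is: (i) conjecture from small cases that $\Eq^\fourdots(\iota_n,\{\{123,213\}\})=\{\pi: \pi^{-1}(1)\le 2\}$ or some similar set of size $(n-1)!$, verify on $n=3,4$; (ii) prove the reachability (lower bound) by an explicit algorithm sorting any such $\pi$ to $\iota_n$, moving values into place from largest down, using a spare large value as catalyst; (iii) prove the containment (upper bound) by exhibiting an invariant — most likely that the relative order of the \emph{two smallest values not yet ``locked'' at the front} together with the position of $1$ obstructs reaching permutations outside the target set, established by induction on $n$ via Proposition~\ref{propo:Refine}-style restriction to a sub-permutation. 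The main obstacle will be step (iii): finding and verifying the invariant that pins the class size down to exactly $(n-1)!$ rather than something larger, since the naive "$\pi(1)=1$" invariant fails and the true invariant is subtler; I expect it to involve the position of the value $1$ interacting with which values can serve as catalysts, and the induction will require carefully peeling off either the largest value $n$ (always a free catalyst, so it can be maneuvered to the end) or the value $1$.
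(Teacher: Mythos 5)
There is a genuine gap: your proposal never identifies the invariant that gives the upper bound, and you explicitly leave finding it as the ``main obstacle.'' The search is conducted at the wrong end of the permutation. The relevant invariant is not about $\pi(1)$ or the position of the value $1$, but about the value $n$: in both patterns $123$ and $213$ the two entries that get exchanged are the two \emph{smaller} ones, while the largest of the three (the catalyst, occupying the rightmost of the three positions) never moves. Hence the entry $n$ can never be one of the two swapped entries, because that would require it to be smaller than the catalyst; so $n$ is completely immobile. In particular your parenthetical claim that $n$ is ``always a free catalyst, so it can be maneuvered to the end'' is backwards --- $n$ cannot be maneuvered anywhere, it simply never leaves position $n$, where it already sits in $\iota_n$. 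This at once bounds $\Eq^\fourdots\left(\iota_n,\big\{\{123,213\}\big\}\right)$ by the set $\{\pi:\pi(n)=n\}$ of size $(n-1)!$. The lower bound is then a single observation: with $n$ parked at the far right, any two entries in positions $i<j<n$ together with $\pi(n)=n$ form a $123$ or a $213$ occurrence, so any transposition of the first $n-1$ entries is a legal move, and transpositions generate all of $S_{n-1}$. This is exactly the paper's (two-sentence) proof.

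Beyond the missing invariant, the fallback plan does not close either: the conjectured class $\{\pi:\pi^{-1}(1)\le 2\}$ has $2(n-1)!$ elements, not $(n-1)!$, and is not the right set (e.g.\ $213\cdots$ type permutations with $\pi(n)\ne n$ are unreachable even though $1$ sits in position $2$), so the reachability algorithm of step (ii) and the induction of step (iii) would be aimed at the wrong target. Once you observe that the catalyst role in $\{123,213\}$ is always played by the largest and rightmost of the three elements, both directions of the argument become immediate.
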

\begin{proof}
Obviously the largest element $n$ cannot be moved away from the end
of the permutation. Equally obviously the $n$, remaining at the far right,
enables the other elements to be freely pairwise transposed, thereby
generating any permutation in $S_{n-1}$.  

\end{proof}

\begin{propo}\label{propo:P2ClassCat}

For $n \geq 1$, $\Num^\fourdots\left(n,\big\{ \{123,213\} \big\}\right) = c_n =
\frac{2n!}{n!(n+1)!}$, the $n$th Catalan number.
\end{propo}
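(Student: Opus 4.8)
The plan is to establish a normal form: I will show that every equivalence class of $S_n$ under $\big\{\{123,213\}\big\}^{\fourdots}$ contains exactly one permutation that avoids the pattern $213$. Since the number of $213$-avoiding permutations of $[n]$ is the Catalan number $c_n$ (the classical count of permutations avoiding any single length-$3$ pattern), the proposition follows at once.

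It is convenient first to restate a move symmetrically: applying the relation $123\leftrightarrow 213$ inside $\pi$ amounts to transposing the positions of two values $a<b$ — whichever of the two is currently on the left — subject to there being a value larger than $b$ somewhere to the right of both. Performing such a move so as to leave $a$ before $b$ strictly decreases the number of inversions, so iterating it terminates; and a permutation admits no inversion-decreasing move exactly when no triple of positions carries the values of a $213$-pattern, that is, when it is $213$-avoiding. Hence each class contains at least one $213$-avoiding permutation, and the real work is to prove uniqueness.

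For uniqueness I will use the invariant that governs the whole relation: the set of \emph{right-to-left maxima} of a permutation, each recorded together with its position and its value. The central lemma is that one move leaves this set unchanged. This is a direct position-by-position check: writing the move as the transposition of the values $a<b$ sitting at positions $s<t$, with a witness value $c>b$ at some position to the right of $t$, one sees that neither $s$ nor $t$ is a right-to-left maximum before or after (their values are at most $b<c$ and $c$ lies further right), that any position left of $s$ has an unchanged suffix-multiset to its right, and that a position strictly between $s$ and $t$ is a right-to-left maximum if and only if its value exceeds $c$, which the swap does not affect; meanwhile the values recorded at the surviving right-to-left maxima are untouched since only positions $s,t$ change value. (As a sanity check, the class whose right-to-left-maxima set is $\{(n,n)\}$ is precisely $\{\pi:\pi_n=n\}$, which recovers Proposition~\ref{propo:P2Id}.)

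It then remains to show that a $213$-avoiding permutation is determined by its set of right-to-left maxima, which I would do by induction using the standard decomposition $\sigma=k\,A\,B$ of a $213$-avoiding $\sigma$, where $k=\sigma_1$, the block $A$ is a $213$-avoiding arrangement of $\{k+1,\dots,n\}$, and $B$ is a $213$-avoiding arrangement of $\{1,\dots,k-1\}$: the right-to-left maxima of $\sigma$ are exactly those of $A$ and those of $B$ (suitably shifted), so one must argue that the configuration pins down the splitting value $k$ and the lengths of $A$ and $B$, after which the induction hypothesis finishes. Putting the pieces together, two equivalent permutations share a right-to-left-maxima set, hence their $213$-avoiding reductions share one, hence they coincide; so each class has a unique $213$-avoiding member, and there are $c_n$ of these. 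I expect the main obstacle to be exactly this last structural point — extracting $k$ and the block lengths of a $213$-avoiding permutation from its right-to-left maxima — since the invariance lemma, though the technical heart, is a routine case check.
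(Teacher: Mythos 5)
Your termination argument (a move that puts the smaller value first strictly decreases inversions, so every class reaches a $213$-avoiding permutation) and your invariance lemma (the set of right-to-left maxima, with positions and values, is preserved by a move) are both correct, and they are precisely the two ingredients of the paper's proof. The paper, however, normalises in the opposite direction: it pushes inversions \emph{up} to the unique $123$-avoiding member of each class, and there uniqueness is immediate, because in a $123$-avoiding permutation the non-(right-to-left-maxima) are forced to form a decreasing subsequence --- if two non-maxima $x<y$ appeared in increasing order, the witness $z>y$ to the right that makes $y$ a non-maximum would complete a $123$ --- so the invariant data determines the permutation: maxima in their recorded places, all other values in decreasing order.

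Your choice of the inversion-minimal ($213$-avoiding) representative leaves you needing the claim that a $213$-avoiding permutation is determined by its right-to-left maxima, and this is a genuine gap: you state it, propose an induction on the decomposition $\sigma=k\,A\,B$, and then concede you have not shown the data determines $k$. It does not do so in any transparent way. Compare $1432$ and $2431$: their right-to-left-maxima sets are $\{(2,4),(3,3),(4,2)\}$ and $\{(2,4),(3,3),(4,1)\}$, differing in a single value, yet $k=1$ for one and $k=2$ for the other; so there is no local or positional rule that reads $k$ off the data, and the induction does not get started as written. The claim is in fact true, but the proofs I can see either route through the $123$-avoiding analysis anyway (once each class is known to contain exactly one $123$-avoiding permutation, there are $c_n$ classes, and your existence argument then forces the $c_n$ $213$-avoiders to be distributed one per class) or require a new structural argument about $213$-avoiders that is substantially harder than the ``routine'' label you give it. The cleanest repair is to flip your normal form to the $123$-avoiding one, i.e., run your inversion argument upward rather than downward; everything else in your write-up then goes through essentially verbatim and coincides with the paper's proof.
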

\begin{proof}
Let $\pi \in S_{n}$.  If $i < j < k$, and $\pi(i) < \pi(j) < \pi(k)$,
then $\pi(k)$ enables the swapping 
of $\pi(i)$ and $\pi(j)$ to arrive at a permutation $\pi^{(1)}$ with a strictly larger
number of inversions. We can continue in this way to obtain a sequence
$\pi =\pi^{(0)}\rightarrow \pi^{(1)}\rightarrow \dots \rightarrow \pi^{(n)}$,
where $\pi^{(n)}$ has no such triples, i.e., $\pi^{(n)}$ is $123$-avoiding.
It remains to show that no matter which sequence of moves we make, the
final permutation $\pi^{(n)}$ is unique.  

Call an element in a permutation $\sigma \in S_{n}$, a
\textit{right-to-left maximum} if it is greater than every element that
occurs to its right. (More formerly, $\sigma_{k}$ is a right-to-left
maximum if $\sigma_{k}>\sigma_{l}$ for all $k<l\leq n$.)  The set
$\mathcal{M}(\pi )$ of these elements remains fixed under the relation
above, and forms a decreasing subsequence of $\pi^{(i)}$ for all $i$.  Now
a permutation is $123$-avoiding if and only if it is a union of at most
two decreasing subsequences.  So $\pi^{(n)}$ must be the unique
permutation formed by rearranging the elements of $\pi$ not in
$\mathcal{M}(\pi )$ in decreasing order.  It is clear that the elements
of $\mathcal{M}(\pi )$ are positioned so as to enable all the necessary
transpositions.  

Thus the ``largest'' (by number of inversions) elements in each equivalence
class are exactly the $123$-avoiding permutations, of which there are
$c_n$~\cite[ch.~14]{BonaWalk} or~\cite[Sec. 4.2]{BonaPerm}.  (Similarly
one can show that the ``smallest'' elements are the $213$-avoiding
permutations.) 
\end{proof}

\begin{eg}
\label{eg:P2ClassCat}
Working within $S_{9}$, we have the following sequence of equivalences,
where elements about to be transformed are indicated in \textbf{bold}.  
The subsequence of right-to-left maxima is $976$.  
\[
\mathbf{3}82941\mathbf{57}6 \rightarrow 58\mathbf{2}9\mathbf{4}13\mathbf{7}6\rightarrow 
5849\mathbf{2}1\mathbf{37}6\rightarrow 58493\mathbf{127}6 \rightarrow 
\mathbf{58}4\mathbf{9}32176 \rightarrow 854\mathbf{9}321\mathbf{76} 
\]
The reader is encouraged to draw corresponding permutation matrices or
diagrams, which clarify visually how the right-to-left maxima facilitate
the transformation of the other elements into a decreasing subsequence.  
\end{eg}

The next two propositions study an equivalence relation and class whose
enumeration is equivalent under symmetry (reversal or complementation)
to $\Eq^{\fourdots}(\iota_n,P_3)$.  
The first leads to \textit{connected}
or \textit{indecomposable permutations}~\cite[A003319]{OEIS}, namely those not fixing
$\{1,2,\dotsc j\}$ for any $1\leq j <n$.  If we define the
\textit{direct sum} of two permutations so that it corresponds to the
direct sum of the corresponding permutation matrices, then these are
simply the permutations which are indecomposable as direct sums in the
usual matrix sense.  Some authors use the term
\textit{plus-indecomposable}~\cite{AAKSimple} to describe this class.  
The second leads to the \textit{layered permutations}, namely those which are a
direct sum of decreasing permutations, introduced by
W.~Stromquist~\cite{StromLayered}, and studied carefully by A.~Price in
his thesis~\cite{PriceLayered}.  

\begin{propo}\label{propo:P3revIndec}
Let $\rho_{n}$ denote the ``reverse word'' permutation $n,n-1,\dots 1$.
Then $\Eq^{\fourdots}(\rho_{n},\big\{ \{321,312,231\} \big\})$ is the set of indecomposable 
permutations.  
\end{propo}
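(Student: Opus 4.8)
The plan is to prove two containments: every permutation reachable from $\rho_n$ is indecomposable, and every indecomposable permutation is reachable from $\rho_n$. For the first direction, I would show that indecomposability is preserved by the moves in $P=\big\{\{321,312,231\}\big\}$. Observe that the three patterns $321$, $312$, $231$ are precisely the patterns of length $3$ in which the first entry is the largest; equivalently, a move replaces a triple $a>b,\,a>c$ (with $a$ in the first of the three positions) by any rearrangement keeping $a$ first — i.e., it swaps the two smaller elements $b,c$ while the larger one $a$ stays put as a ``catalyst'' to its left. The key point is that such a swap never creates a nontrivial prefix: if after a move the positions $\{1,\dots,j\}$ were mapped to the values $\{1,\dots,j\}$, then since the catalyst $a$ sits to the left of both swapped entries and is larger than both, undoing the swap (also a legal move of the same type) would leave this prefix intact too, so $\pi$ itself would have been decomposable — contradiction unless the prefix is trivial. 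I should double-check the boundary behaviour of the indices, but this is routine.

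For the reverse direction — every indecomposable $\pi\in S_n$ is reachable from $\rho_n$ — I would argue by induction on $n$, or better, directly reduce $\pi$ toward $\rho_n$. Since $\rho_n$ has the maximal number of inversions, it is natural to think of the moves as sorting $\pi$ ``upward'' in inversion count until we hit $\rho_n$; but not every indecomposable permutation can move (e.g. small cases), so instead I would run the moves in the direction that increases the number of inversions and show that starting from $\rho_n$ we can reach any indecomposable permutation. Concretely: given indecomposable $\pi$, I want to realize $\pi$ as the endpoint of a chain from $\rho_n$. The cleanest route is to use the symmetry noted just before the proposition — this relation is equivalent under reversal/complementation to the $P_3 = \big\{\{123,132,213\}\big\}$ relation — so I can instead characterize $\Eq^\fourdots(\iota_n, P_3)$ and then apply the symmetry. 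Under $P_3$ a move swaps two entries provided a \emph{smaller} element lies somewhere to their left; one checks $\iota_n$ is the minimum-inversion element of its class, and then shows that from $\iota_n$ one reaches exactly the indecomposable permutations by an explicit inductive construction: place the value $1$ (it can be moved arbitrarily far right once anything larger is to its left, but crucially cannot be pushed past position $1$ without a smaller catalyst, which does not exist), and recurse on the remaining structure, using the hypothesis that the complementary block is itself reachable.

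The main obstacle I anticipate is the second containment: proving that \emph{every} indecomposable permutation is actually reachable, not just that reachable permutations are indecomposable. The subtlety is that the catalyst requirement can deadlock — there may be indecomposable permutations from which no move is possible at all — so one cannot simply ``sort toward $\rho_n$.'' The argument must go in the expanding direction (from $\rho_n$ outward, or from $\iota_n$ outward under the symmetric relation) and exhibit, for an arbitrary indecomposable target $\pi$, a specific sequence of moves producing it; this is where an induction on $n$ with a carefully chosen first step (isolating the position of $1$ or of $n$, and invoking indecomposability to guarantee the needed catalyst exists at each stage) is essential. I expect the bookkeeping here — verifying the catalyst is always present and that the recursion respects indecomposability of the sub-permutation — to be the technical heart of the proof.
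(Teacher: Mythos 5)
Your overall two-containment plan matches the paper's, but there are two genuine problems. First, you have mischaracterized the move set. The block $\{321,312,231\}$ is \emph{not} the set of length-$3$ patterns whose first entry is the largest: in $231$ the largest entry sits in the \emph{second} position, and the replacement $312\leftrightarrow 231$ (on values $c<b<a$ this is $acb\leftrightarrow bac$) is a $3$-cycle of the entries, not a transposition of the two smaller ones past a fixed catalyst on the left. Consequently your closure argument, which leans on ``the catalyst $a$ sits to the left of both swapped entries and is larger than both,'' covers only the $321\leftrightarrow 312$ moves. The correct common feature of the three patterns is that the first entry exceeds the third, and that is what the paper exploits: if a replacement created a cut point strictly inside the span of the replaced triple, the new first entry would have to land in the lower block and the new third entry in the upper block, forcing first $<$ third, which none of $321$, $312$, $231$ satisfies; if the triple does not straddle the cut, the value sets on the two sides are unchanged and the original permutation was already decomposable. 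Reversibility of the moves then also rules out merging blocks.

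Second, your reachability direction is a plan rather than a proof, and the plan as stated aims at the wrong target: under reversal or complementation, $\Eq^{\fourdots}(\iota_n,P_3)$ corresponds to the set of \emph{skew}-indecomposable permutations (those not expressible as a skew sum), not the indecomposable ones, so ``characterize $\Eq^{\fourdots}(\iota_n,P_3)$ as the indecomposables'' cannot be the statement you prove. More importantly, the key lemma that makes reachability work is missing, and it also dissolves your worry about deadlock: every indecomposable $\tau\neq\rho_n$ contains a (not necessarily adjacent) occurrence of $312$ or $231$. One finds it by taking an ascent $\tau_k<\tau_{k+1}$; if no element to the right of $\tau_{k+1}$ is below $\tau_k$ and no element to the left of $\tau_k$ is above $\tau_{k+1}$, then indecomposability forces some $y$ left of $\tau_k$ to exceed some $x$ right of $\tau_{k+1}$, and $y,\tau_k,\tau_{k+1},x$ forms a $3142$, which contains both a $312$ and a $231$. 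Converting the found pattern to $321$ strictly increases the inversion number, so the process terminates; since every indecomposable permutation other than $\rho_n$ admits such a move, it can only terminate at $\rho_n$, and reversibility then gives reachability of every indecomposable permutation from $\rho_n$. Without this (or an equivalent) lemma, your proposed induction on $n$ has no engine.
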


\begin{proof}
When viewed as a $(0,1)$-matrix, any permutation decomposes as a direct sum of
irreducible blocks along the main diagonal; in particular, the identity
$\iota_n$ decomposes into $n$ singleton blocks, while $\rho_{n}$ is
indecomposable and is one large block. A permutation is connected if and
only if it decomposes as a single block.  

First note that if any transformation of entries $(a_1,a_2,a_3) \rightarrow
(b_1,b_2,b_3)$ applied within a block causes it to split into more than
one block, then $b_1$ must be in the leftmost/lowest of the new blocks,
and $b_3$ in the rightmost/highest. Therefore $b_1$ must be less than
$b_3$, which is exactly what does not happen with any of our possible
transformations, because the first element is larger than the third in
each of 321, 312 and 231.  Since all of our transformations are
reversible, this shows also that we cannot combine blocks. Thus,
\textit{the irreducible block structure of a permutation does not change
under these transformations.}  In particular, if we start with an
indecomposable permutation such as $\rho_{n}$, successive applications
of the permitted operations will always produce indecomposable
permutations.

Next we have to show that all indecomposable permutations are in
fact reachable from $\rho_{n}$. Remembering that our replacement operations are
all reversible, we will instead show that we can always return to $\rho_{n}$
from an arbitrary indecomposable permutation.
Take $n\geq 3$, and let $\tau  = \tau_1\tau_2\dotsb \tau_n$ be an arbitrary 
indecomposable permutation other than $\rho_{n}$. We will show that $\tau $ 
always contains at least one of 312 or 231. It's easy to see that $\tau$
must have an \textit{ascent}, i.e., there exists $k$ such that
$\tau_{k}<\tau_{k+1}$.  
Now if any element to the right of $\tau_{k+1}$ is less than $\tau_k$ we
have a 231, so assume there are none such. Similarly, assume there is no
element to the left of $\tau_k$ and greater than $\tau_{k+1}$ (avoiding
312).
But there must be some $y$ to the left of $\tau_k$ which is greater than
some $x$ to the right of $\tau_{k+1}$, or otherwise the permutation decomposes
between $\tau_k$ and $\tau_{k+1}$. These four elements $y,\tau_k,\tau_{k+1},x$ form a
3142, which contains both a 312 $(y,\tau_k,x)$ and a 231 $(y,\tau_{k+1},x)$.

Having now located a 312 or 231, we can then apply either $312 \rightarrow 321$
or $231 \rightarrow 321$, as appropriate. Each of these operations 
simply switches
a pair of elements, and (as we have seen in the proof of
Proposition~\ref{propo:P2ClassCat}) strictly increases 
the number of inversions, progressing us toward $\rho_{n}$. 
This completes the proof that all indecomposable permutations are 
reachable, and therefore the proof that the reachable permutations are
exactly the indecomposable permutations.
\end{proof}

\begin{propo}\label{propo:P3revNum}
$\Num^\fourdots\left(n,\big\{ \{321,312,231\} \big\}\right) = 2^{n-1}$ for $n \geq 1$.
\end{propo}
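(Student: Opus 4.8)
The plan is to show that the equivalence classes of $S_n$ under $\big\{ \{321,312,231\} \big\}$ are in natural bijection with the compositions of $n$, of which there are exactly $2^{n-1}$.

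First I would sharpen the block-structure observation already used in the proof of Proposition~\ref{propo:P3revIndec}: not only do the permitted moves preserve the irreducible block decomposition $\pi = \pi_1 \oplus \pi_2 \oplus \dots \oplus \pi_m$, but in fact \emph{every} move takes place entirely within a single block. Indeed, if three positions $p_1 < p_2 < p_3$ carry one of the patterns $321$, $312$, $231$, then in each case the entry at $p_1$ exceeds the entry at $p_3$; since the blocks of $\pi$ occupy intervals of positions matching intervals of values, a larger value sitting to the left of a smaller one must lie in the same block, so $p_1$ and $p_3$ — and hence $p_2$, which lies between them — all belong to one block. Consequently the composition $(n_1,\dots,n_m)$ recording the block sizes is an invariant of the equivalence class, so permutations with different block compositions are inequivalent.

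Next I would prove the converse: any two permutations of $S_n$ with the same block composition $(n_1,\dots,n_m)$ are equivalent. Because moves in distinct blocks do not interfere, it suffices to show that within each block the restriction $\pi_i$ can be carried to any other indecomposable permutation of $S_{n_i}$. For $n_i = 1$ and $n_i = 2$ there is only one indecomposable permutation and nothing to prove; for $n_i \geq 3$ this is exactly Proposition~\ref{propo:P3revIndec}, which says every indecomposable permutation is $\big\{ \{321,312,231\} \big\}$-equivalent to $\rho_{n_i}$. Applying this to each block in turn shows $\pi$ and any permutation with the same block composition are equivalent, so the equivalence class of $\pi$ is determined precisely by its block composition.

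It remains to count: the compositions of $n$ are in bijection with the subsets of the $n-1$ gaps between consecutive values at which one may break, so there are $2^{n-1}$ of them, and one checks the degenerate cases $n = 1, 2$ directly (no length-$3$ move is available, giving $1$ and $2$ classes). The only point requiring care is the first step — ruling out moves that straddle two blocks — but this follows immediately from the interval structure of a block together with the fact that each of $321$, $312$, $231$ places its largest entry before its smallest, so I do not anticipate a genuine obstacle.
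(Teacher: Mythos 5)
Your proposal is correct and follows essentially the same route as the paper: establish that the irreducible block decomposition is an invariant (because each of $321$, $312$, $231$ places its first entry above its third, moves can neither merge nor split blocks), invoke Proposition~\ref{propo:P3revIndec} within each block to show the block composition is a complete invariant, and count compositions of $n$ to get $2^{n-1}$. The paper phrases the canonical representatives as layered permutations (direct sums of anti-identities) rather than as compositions, but this is the same count.
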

\begin{proof}
As we saw in the proof of the previous proposition, the irreducible
block structure of a permutation does not change under the
transformations we are considering here.  By the arguments already
given, we can work within any indecomposable block to restore it to an
anti-identity. Therefore each equivalence class consists of all the
permutations with a given block structure, and contains exactly one
permutation which is a direct sum of anti-identities.

These are exactly the layered permutations, and there are clearly
$2^{n-1}$ of them, with a factor of $2$ according to whether each consecutive
pair of elements is or is not in the same layer. (Equivalently, any such
permutation is determined by the composition of $n$ representing its
block sizes, of which there are $2^{n-1}$.)  
\end{proof}

Finally we apply the reversal (or complementation) involution on $S_{n}$
to the above result to get our result for the partition $P_{3}$.  

\begin{theorem}\label{thm:P3Id}
$\Num^\fourdots\left(n,\big\{ \{123,132,213\} \big\}\right) = 2^{n-1}$ for $n \geq 1$.
\end{theorem}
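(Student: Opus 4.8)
The plan is to deduce Theorem~\ref{thm:P3Id} directly from Proposition~\ref{propo:P3revNum} by applying the reversal involution on $S_n$, exactly as the sentence preceding the theorem statement suggests. First I would recall that the map $r\colon S_n\to S_n$ sending $\sigma=\sigma_1\sigma_2\cdots\sigma_n$ to its reverse word $\sigma_n\cdots\sigma_2\sigma_1$ is a bijection, and that it sends a subsequence of pattern $\alpha$ occupying positions $i_1<i_2<i_3$ to a subsequence of pattern $\alpha^{\mathrm{rev}}$ occupying the positions $n+1-i_3<n+1-i_2<n+1-i_1$. In particular $r$ carries pattern $123$ to pattern $321$, pattern $132$ to pattern $231$, and pattern $213$ to pattern $312$. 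Since no adjacency or value restrictions are in force for $\ourstar=\fourdots$, a legal move of $P_3=\big\{\{123,132,213\}\big\}$ applied to $\sigma$ corresponds under $r$ to a legal move of $\big\{\{321,231,312\}\big\}$ applied to $r(\sigma)$, and conversely.

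Next I would make the correspondence of equivalence classes precise: because $r$ is an involution intertwining the two relations, it induces a bijection between $\Classes^\fourdots(n,P_3)$ and $\Classes^\fourdots\!\big(n,\{\{321,312,231\}\}\big)$, hence $\Num^\fourdots(n,P_3)=\Num^\fourdots\!\big(n,\{\{321,312,231\}\}\big)$. Proposition~\ref{propo:P3revNum} gives the right-hand side as $2^{n-1}$ for $n\ge1$, which is the claim. (Equivalently, one could use the complementation involution $\sigma_i\mapsto n+1-\sigma_i$, which also swaps $123$ with $321$, $132$ with $231$, and $213$ with $312$; either symmetry works.) I should also note the degenerate small cases $n=1,2$, where there are no length-$3$ subsequences at all, so every permutation is isolated and $\Num^\fourdots(n,P_3)=n!=2^{n-1}$ trivially — this matches the stated range $n\ge1$.

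There is essentially no obstacle here: the only thing to be careful about is checking that reversal does send the \emph{block} $\{123,132,213\}$ onto the block $\{321,312,231\}$ as a set (it does, since $123^{\mathrm{rev}}=321$, $132^{\mathrm{rev}}=231$, $213^{\mathrm{rev}}=312$), so that ``same block'' is preserved in both directions and the two relations really are conjugate under $r$. Everything else — bijectivity of $r$, that it respects the move structure when positions are unconstrained, that conjugate equivalence relations have equinumerous class sets — is routine. So the proof is a one-line appeal to Proposition~\ref{propo:P3revNum} via the reversal symmetry, with perhaps a sentence spelling out how $r$ acts on the three patterns.
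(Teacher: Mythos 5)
Your proposal is correct and is exactly the paper's approach: the paper derives Theorem~\ref{thm:P3Id} by "applying the reversal (or complementation) involution" to Proposition~\ref{propo:P3revNum}, and you have simply spelled out the routine verification that reversal conjugates the block $\{123,132,213\}$ onto $\{321,312,231\}$ and hence identifies the two class sets. The explicit check of the degenerate cases $n=1,2$ is a nice touch but otherwise nothing differs.
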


\section{Adjacent transformations}\label{sec:adj}

As mentioned in the introduction, this section contains our most
interesting results and proofs.  The first rediscovers sequence A010551
from Sloane~\cite{OEIS}. 

\begin{theorem}\label{thm:P2bId}
$\#\Eq^\twolines\left(\iota_n,\big\{ \{123,213\} \big\}\right) = \floor{n/2}!\ceiling{n/2}!$ 
for $n\geq 1$.
\end{theorem}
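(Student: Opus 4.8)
The plan is to describe the equivalence class $\Eq^{\twolines}(\iota_n,P_2)$ exactly (with $P_2=\{\{123,213\}\}$) and then count it by a standard device. First observe what a move does here: the three positions involved must carry pattern $123$ or $213$, which happens precisely when the entry in position $i+2$ exceeds both entries in positions $i$ and $i+1$; so a move transposes the entries in two consecutive positions $i,i+1$ exactly when $\pi(i+2)>\max\{\pi(i),\pi(i+1)\}$. I will show that
\[
  \Eq^{\twolines}(\iota_n,P_2)=C_n:=\bigl\{\pi\in S_n:\ \pi(p)\le\floor{(n+p)/2}\ \text{for every}\ p\bigr\},
\]
equivalently, the permutations in which every value $v$ occupies a position $\ge 2v-n$. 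Granting this, the count falls out at once: the thresholds $\floor{(n+p)/2}$ are weakly increasing in $p$, so the number of permutations they dominate is $\prod_{p=1}^{n}\bigl(\floor{(n+p)/2}-p+1\bigr)=\prod_{p=1}^{n}\bigl(\floor{(n-p)/2}+1\bigr)=1\cdot1\cdot2\cdot2\cdot3\cdot3\cdots=\floor{n/2}!\,\ceiling{n/2}!$.

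For the inclusion $\Eq^{\twolines}(\iota_n,P_2)\subseteq C_n$ I would argue by invariance. The defining property of $C_n$ holds for $\iota_n$, so it suffices to check that a single move preserves it. A move changes the positions of only its two transposed entries; one of them moves one step to the right, which can only help, and the other, a value $w$, moves one step to the left, from position $i+1$ to position $i$. The move's hypothesis says the catalyst $c=\pi(i+2)$ satisfies $c>w$, and since the invariant held beforehand, $i+2=\pi^{-1}(c)\ge 2c-n\ge 2(w+1)-n$, whence $i\ge 2w-n$, which is exactly what $C_n$ demands of $w$ at its new position. (In particular the value $n$ can never move, so $\pi(n)=n$ throughout.)

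The reverse inclusion $C_n\subseteq\Eq^{\twolines}(\iota_n,P_2)$ — that every $\pi\in C_n$ can be sorted to $\iota_n$ — is where the real work lies, and I expect it to be the main obstacle. The approach is to bring the values $n,n-1,n-2,\dots$ to their home positions in turn. Suppose $v+1,\dots,n$ already occupy positions $v+1,\dots,n$ and value $v$ sits $k$ positions to the left of home; since $\pi^{-1}(v)\ge 2v-n$ we get $k\le n-v$, so at least $k$ larger values are available. The mechanism is a ``bucket brigade'': a value resting at its home position can be slid one step to its left, using the next larger value (or the right end of the word) as catalyst, and once parked it is itself the catalyst that slides the value below it one step left; iterating, one assembles a chain of about $k-1$ escorting values that ratchets $v$ rightward one position at a time, and then dismantles the chain, returning $v+1,\dots,n$ to their homes. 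Making this rigorous requires a nested induction — on $v$, and on the gap $k$ — carrying along the bookkeeping that every intermediate permutation still lies in $C_n$; the delicate point is precisely that a large value may have to be parked temporarily to the left of its home, and $C_n$ is exactly the condition guaranteeing that enough room to its right is always available for this to be legal. With sufficiency established, the characterization of $C_n$ is proved, and the evaluation $\floor{n/2}!\,\ceiling{n/2}!$ follows from the product displayed in the first paragraph.
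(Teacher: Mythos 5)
Your identification of the class is correct ($C_n$ is exactly the set the paper arrives at: value $v$ confined to positions $\ge 2v-n$), the count of $C_n$ is right, and your closure argument for the inclusion $\Eq^\twolines(\iota_n,\{\{123,213\}\})\subseteq C_n$ is complete and in fact crisper than the paper's (which argues informally that $n$ is stuck, hence $n-1$ is confined to the last three positions, and so on); your observation that a leftward-moving value $w$ inherits its bound from the catalyst $c\ge w+1$ is a genuinely clean invariance proof. The gap is entirely in the reverse inclusion, which you correctly flag as the hard part but whose sketched mechanism does not work as described. Because every move is a transposition, when your chain of escorts ``ratchets $v$ rightward,'' each escort that $v$ passes is simultaneously displaced leftward past $v$; so once $v$ reaches home, the escorts $v+1,\dots$ are stranded to its \emph{left}, interleaved with small values, and ``dismantling the chain'' is not the reversal of its assembly --- reversing those moves would drag $v$ back out of its home. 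Concretely, for $n=6$ starting from $321456$ (so $v=3$, $k=2$), your procedure assembles escorts to reach $324516$, ratchets to $243516$ with $3$ home, and is then stuck: the only transposition that moves $4$ rightward is the swap of positions $2,3$, which exchanges $4$ with $3$ and undoes the homing. Returning a stranded escort $u$ to position $u$ requires a catalyst exceeding $u$ two places to its right at every step, and the available large values are themselves stranded escorts or are far away at their homes; the actual escape route (routing the value $1$ leftward through the whole configuration) is a different procedure altogether. So the nested induction ``on $v$ and on $k$'' does not close without a substantially new idea.

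The paper avoids this trap by doing nearly the opposite: instead of homing the large values and freezing them, it first pushes the top $\floor{n/2}$ values to their \emph{leftmost} legal positions, producing an alternating skeleton such as $15263748$, and then proves the key lemma that two small values separated by a skeleton element can be exchanged without disturbing the skeleton, via $a\mathbf{XbY}\rightarrow \mathbf{abX}Y\rightarrow b\mathbf{aXY}\rightarrow bXaY$ for $\{a,b\}<X<Y$. With the small values freely permutable around the skeleton, the target permutation is then assembled two positions at a time from the left, reclassifying skeleton elements as ``small'' as the frontier advances. Some such ``action at a distance'' lemma is exactly the ingredient your sketch is missing; until the sufficiency half is supplied, you have only proved the inequality $\#\Eq^\twolines(\iota_n,\{\{123,213\}\})\le \floor{n/2}!\,\ceiling{n/2}!$.
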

\begin{proof}
Generically stated, our rules in this case allow the transposition of
any two adjacent elements if the element immediately to their right is
bigger than both of them.  Applying these successively to $\iota_{n}$,
we note first that the largest element, $n$, never comes unglued from
the right end, because there is nothing to enable it; therefore, $n-1$
must stay somewhere in the last three positions (as only $n$ can enable
its movement).  Similarly, $n-2$ remains somewhere in the last five,
$n-3$ within the last seven and so on; such restrictions apply to the
largest $\floor{n/2}$ of the elements.  This limits the number of
permutations potentially reachable to $\floor{n/2}!\ceiling{n/2}!$: placing
the elements from largest to smallest, one has a choice of
$1,2,3,\ldots,\floor{n/2},\ceiling{n/2},\ldots,3,2,1$ positions to put each
element.

Next we will show that all permutations conforming to
these restrictions are indeed reachable from $\iota_{n}$. We will do this in 
two stages. In Stage~1 we move each of the large, constrained
elements as far left as it can go. (In the most natural way to 
achieve this, the smaller, unconstrained elements remain in
their natural increasing order, although we shall see that this
does not matter as they can then be permuted freely.) In Stage~2
we construct the target permutation two elements at a time,
working from left to right.

\textit{Stage~1: Maximally spread out the $\floor{n/2}$ largest
elements. \/}  First move $\floor{n/2}+1$ one step left, using a move of
type $123 \rightarrow 213$, in which $\floor{n/2}+2$ plays the role of
the facilitating ``3''. In the same way, move the element
$\floor{n/2} + 2$ to the left, continuing until the entire block
$\floor{n/2},\dots,n-1$ has been shifted one to the left. The element
$n-1$ has now reached its leftmost permitted position, and will remain
in place as we now similarly transform the block $\floor{n/2},\dots,n-2$.
This moves $n-2$ as far left as it will go, and we now transform the next
smaller block, etc. Continue until reaching a permutation which alternates the
subsequences $1,2,\dotsc \floor{n/2}$ and $\floor{n/2}+1,\dotsc ,n$ (e.g.,
$15263748\in S_{8}$ or $516273849\in S_{9}$).
This places each constrained element (in the latter subsequence) as far
left as possible. These elements will now serve as a ``skeleton''
enabling the construction of the target permutation.

\textit{Stage~2: Construct the target permutation. \/} The key observation
making this stage possible is that 
the small, unconstrained elements can be freely moved about while leaving
the large elements in the skeleton fixed. For if
$\{a,b\} < X < Y$, we can always execute the following sequence of
moves: $a\mathbf{XbY} \rightarrow \mathbf{abX}Y \rightarrow
b\mathbf{aXY} \rightarrow bXaY$. 
In the case where $n$ is odd, we may consider the leftmost element in the
skeleton to be in position 3, and the two small elements in positions
1 and 2 can be interchanged if desired.

Now we examine the target permutation and move the required element(s)
into the first position (if $n$ is even), or the first two positions
(if $n$ is odd). At this point, the elements occupying the next two
positions are reclassified as small, so that the skeleton terminates
two positions further to the right, and we continue by placing and
ordering the next pair of elements. By continuing two elements at a time, we
can build the entire target permutation. 
\end{proof}

\begin{eg}
To reach the target permutation $452637819$ according to the above
scheme we would apply the following moves.  The numbers indicated in
\textbf{bold} are about to be transposed, either by a standard
$\mathbf{12}3\rightarrow \mathbf{21}3$ move, or by the move
(suppressing intermediate steps) $ \mathbf{a}X\mathbf{b}Y\rightarrow 
\mathbf{b}X\mathbf{a}Y$ described
above.  

\[
\begin{aligned}     
123\mathbf{45}6789 &\rightarrow 1235\mathbf{46}789 &\rightarrow 12356\mathbf{47}89 &\rightarrow  123567\mathbf{48}9 &\rightarrow 12\mathbf{35}67849 &\rightarrow 125\mathbf{36}7849 \\  
&\rightarrow 1256\mathbf{37}849 &\rightarrow 1\mathbf{25}673849 &\rightarrow 15\mathbf{26}73849 &\rightarrow \mathbf{15}6273849 &\rightarrow \cred{5}1\cred{6}2\cred{7}3\cred{8}4\cred{9} \\  
51627\mathbf{3}8\mathbf{4}9 &\rightarrow 516\mathbf{2}7\mathbf{4}839 &\rightarrow 5\mathbf{1}6\mathbf{4}72839 &\rightarrow \mathbf{54}6172839 &\rightarrow 456\mathbf{1}7\mathbf{2}839\\ 
&\rightarrow 45\mathbf{62}71839 &\rightarrow 45267\mathbf{1}8\mathbf{3}9 &\rightarrow 4526\mathbf{73}819 &\rightarrow 452637819 
\end{aligned}
\]

\end{eg}

\begin{theorem}\label{thm:P5bId}
Let $n$ be an integer $\geq 3$, and for any odd positive integer $m$ set
$m!! = 1\cdot 3\cdot \dotsb \cdot m$, the product of odd natural numbers
less than or equal to $m$.  Then 

\[
\#\Eq^\twolines\left(\iota_n,\big\{ \{123,132,321\} \big\}\right) =
\mycases{ \frac{3}{2}(k)(k+1)(2k-1)! & \kern -6.2pt for $n=2k+1$ odd. \cr
\frac{3}{2}(k)(k-\frac{1}{3})(2k-2)! - (2k-3)!! & \kern -6.2pt for $n=2k$ even.\cr}
\]

\end{theorem}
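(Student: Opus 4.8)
The plan is to combine the two replacement rules $123\leftrightarrow 321$ and $123\leftrightarrow 132$, restricted to adjacent triples, into a single analysis built on top of the $P_2$-case (Theorem~\ref{thm:P2bId}), whose equivalence class we already understand completely. First I would set up coordinates: working from $\iota_n$, track where the large elements $n, n-1, n-2, \dots$ can travel. The rule $123\to 132$ swaps the last two of three adjacent increasing elements when a smaller element sits to their left, and the rule $123\to 321$ reverses all three. As in the proof of Theorem~\ref{thm:P2bId}, $n$ stays glued to the right end (nothing can enable its motion), but now $n-1$ has slightly more freedom than before: it can reach the last two positions via a $132$-move, and the last three via a $321$-move, so effectively the ``window'' constraints are loosened by one step compared to the $P_2$-analysis. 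I would prove the analogous claim that the largest $\sim n/2$ elements are confined to nested windows, giving a clean upper bound on $\#\Eq^\twolines(\iota_n, P_5)$, and then prove reachability of everything that respects these windows by a two-stage spreading-then-building argument mirroring the proof of Theorem~\ref{thm:P2bId}, with the $321$-move used to ``rotate'' three adjacent entries so that a blocked small element can be let through.

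The arithmetic then comes from counting lattice-like configurations: placing the constrained large elements from largest to smallest, element $n-j$ has a number of legal positions that grows roughly linearly in $j$, and the unconstrained small elements are freely permutable among the gaps once the skeleton is in place (by an $aXbY \to bXaY$-type maneuver, now also available with the extra $321$/$132$ moves). Multiplying these choices, and splitting into the cases $n = 2k+1$ and $n = 2k$ because the ``middle'' element behaves differently depending on parity, should produce the product $\tfrac32 k(k+1)(2k-1)!$ in the odd case and $\tfrac32 k(k-\tfrac13)(2k-2)! - (2k-3)!!$ in the even case. The subtracted $(2k-3)!!$ term is the signal that in the even case a na\"ive product overcounts by exactly the permutations in which the central pair of elements is in the ``wrong'' relative order and cannot be corrected — equivalently a double-factorial's worth of boundary configurations that the $132$-move alone (without a partner to its right) cannot reach. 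I would isolate and count those bad configurations directly.

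The order of steps: (1) prove the window constraints and the upper bound; (2) prove Stage~1 (spreading the large elements into an alternating skeleton) works with the enlarged move set; (3) prove Stage~2 (building the target, two or three entries at a time, using $321$ to unjam small elements) reaches every window-respecting permutation, except for the parity-dependent central-pair obstruction in the even case; (4) count the reachable set, carefully handling the even-case correction; (5) assemble the closed forms. The main obstacle I expect is step~(3) in the even case: pinning down exactly which permutations with $n$ even fail to be reachable, and showing the count of those is precisely $(2k-3)!!$. This requires a delicate local analysis of what happens near the two central positions, where the skeleton runs out and there is no large element to the right to catalyze the final adjustment; I would likely prove it by an invariant — something like the relative order of the two ``median'' values being preserved modulo the available moves — and then count the permutations violating the invariant by recursion on $k$, which should yield the double factorial.
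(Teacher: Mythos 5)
Your proposal rests on a premise that is false for $P_5$: that $n$ stays glued to the right end and that the large elements are confined to nested windows as in Theorem~\ref{thm:P2bId}. That confinement holds for $P_2$ only because the single move $123\leftrightarrow 213$ fixes the element playing the role of ``3''. Once you add $123\leftrightarrow 132$ and $123\leftrightarrow 321$, the ``3'' role moves: applying $123\to 132$ to the final $3$-factor of $\iota_n$ puts $n$ in position $n-1$, and then $123\to 321$ applied to positions $n-3,n-2,n-1$ carries $n$ to position $n-3$; iterating, $n$ travels essentially anywhere. A quick sanity check kills the window bound numerically as well: for $n=5$ your scheme would cap the class size near $\floor{n/2}!\,\ceiling{n/2}!=12$, while the theorem asserts $54$. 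The correct invariants live at the bottom of the permutation, not the top: the element $1$ can only ever play the role ``1'', so each move either fixes it or shifts it by exactly two positions, forcing it to remain in a position of odd index; and a parity analysis of the element $2$ shows it can never sit in an odd-indexed position to the left of the $1$. Summing over the admissible positions of $1$ and $2$ (with the remaining $n-2$ elements free) is what produces $\tfrac32 k(k+1)(2k-1)!$, not a product of window sizes.

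Your reading of the $(2k-3)!!$ correction is likewise off target. It does not count a ``central-pair'' relative-order obstruction; it counts a family $\mathcal{X}_n$ of permutations (for even $n$ only, with the $1$ pinned at position $n-1$) that satisfy both parity conditions yet contain no adjacent $3$-factor of pattern $123$, $132$, or $321$ at all --- they are isolated singleton classes built by placing successive left-to-right minima in the odd positions $n-1, n-3,\dots$ with a free choice at each even position, whence the double factorial. Identifying and excluding exactly these rigid alternating configurations, and then showing every other admissible permutation with $1$ in position $n-1$ can still be unjammed (by propagating an ``odd $321$'' rightward), is the genuinely delicate part of the even case, and nothing in your outline would discover it, since your steps (1)--(3) are anchored to the wrong set of constrained elements.
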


\begin{proof}
As in the previous proof, we begin by giving a set of necessary conditions
for the a permutation to be reachable from $\iota_{n}$, then show how to reach each 
such permutation, thereby proving that our conditions in fact
characterise $Eq^\twolines\left(\iota_n \right)$.  

The first restriction is that the element 1 must occupy
a position of odd index, because it can only participate
in a move as a ``1'', and every move either leaves it 
fixed or moves it by two positions.
The second restriction is that the element 2 cannot occupy
a position of odd index to the left of 1, because if it
winds up to the left of 1, its last move there was
$132 \rightarrow 321$, and since 1 is always in a position of odd index,
this places 2 into a position of even index. Then it stays
on the left of the 1, so it must play the role of ``1'' in 
any future swaps, again preserving the parity of its position.
Let us call the class of permutations thus described $\mathcal{A}_n$, the
class of \emph{admissible} permutations.

Now in the case where $n=2k+1$ is odd, this characterization is exact,
so we will first complete the proof for odd $n$. In the case where $n=2k$
is even, there are a small number of exceptional permutations which must
be excluded; we will turn to these at the end of the proof.

\textit{Case~1: $n$ is odd. \/}  First we count the number of admissible permutations: 
If the 1 is in position 1, then the 2 can be in any of
$n-1$ positions, and the remaining $n-2$ elements can be arranged
in $(n-2)!$ ways.  
If the 1 is in position 3, then the 2 can be in any of 
$n-2$ positions; if the 1 is in position 5, then in any of 
$n-3$ positions, and so forth, while in each case, the remaining 
$n-2$ elements can be placed freely.
Summing over the possible locations for the element 1, we arrive
at the given formula for odd $n$, and also at the formula for even $n$
upon suppression of the double-factorial correction term. 
For example $\mathcal{A}_{5}$ consists of 54 permutations: 
all 24 of the form $1****$, 18 of the form $**1**$ (all but the six of
the form $2*1**$), and 12 of the form $****1$ (all but those of the form
$2***1$ or $**2*1$).  

It remains to show that all admissible
permutations are in fact reachable. We do this in two stages.

\textit{Stage~1:\/} First we will show that all permutations beginning with a 1 are
reachable from the identity. We proceed in steps; after each, we will have
a monotonically increasing initial segment, followed by 
a segment that matches the target permutation.  This segment gets
created from right to left, each step increasing the length of the
completed segment by 1 by selecting and moving one element from the
increasing segment to the left end of the completed segment.  

Note that within an increasing segment, the concatenation of moves
$a\mathbf{bxy}\rightarrow \mathbf{ayx}b\rightarrow axyb$ allows a
selected element $b>1$ to move two positions rightward while maintaining
that the segment to its left is increasing.  So if the target position
for $b$ is an even number of positions away, an appropriate number of
such moves will suffice.  If $b$ is an odd number of positions away,
first apply the move $\mathbf{abx}y\rightarrow axby$, then proceed as
before.  This shows that we can reach any permutation that starts with a
1 from $\iota_{n}$.  

\textit{Stage~2:\/} To show that we can get to the identity from an
arbitrary admissible permutation, it remains to show that the element 1
can always be moved to the front of such a permutation. In fact we
only need show that the element 1 can always be moved \emph{toward} the
front (necessarily by two positions), and then we can just move it
repeatedly until it is \emph{at} the front.

If the 1 is at the very end of the permutation, the 2 must be to its
left and in a position of opposite parity.  Move the 2 rightward using
moves $123\rightarrow 321$ or $132\rightarrow 321$ (the 2 functioning as
a ``1'') until it is adjacent to the 1; the 1 can then be moved
leftward.

We use the term \textit{$k$-factor} here and in later proofs as
shorthand for ``length $k$ subsequence of adjacent elements'' of a
permutation. If the 1 is not at the very end of the permutation, then
consider the 5-factor centred on the 1. (At this point, we
are relying on the assumption that $n$ is odd, because the largest
element occupies a position of odd index, and therefore if it is
not at the end of an odd permutation, it must be at least two
positions away from the end, guaranteeing the 5-factor
which we need. We will return to this point when we consider the
even case below.) There are 24 cases. 
For 18 of these cases, we know that we can convert this segment
to an increasing one (or to any other permutation beginning
with a 1) using the analysis for $n=5$, which is easy to check by hand.  
The cases which cannot be handled are those of the form 2*1**.
We will add a preprocessing step to make sure that we are not in
such a case. Namely, we will locate the element 2 (the actual 2)
and move into one of the spaces indicated by a $*$.  

If the 2 is somewhere to the left of the 1 then the same argument used
above in the case of permutations ending in 1 again shows that the 1 can
be moved leftward.  

If the 2 is somewhere to the right of the 1, we will go and fetch it as
follows. Move the 1 to the \emph{right} until it is either one or two
positions left of the 2. We do this by moving it two positions at a
time, using either $123 \rightarrow 321$ or $132 \rightarrow 321$, as
required.  This leaves behind a consecutive trail of elements in which
each odd-position element is larger than the even-position element which
follows it.  We will call these ``odd/even descents''.

If the 2 was in an odd position, we will arrive at $1x2$, which we
correct to $12x$. If the 2 was in an even position, we will arrive
at $12x$ directly. 

Now we pull both the 1 and the 2 back through the odd/even descents by a
sequence of consecutive moves of the form $\mathbf{yx1}2\rightarrow
\mathbf{1xy}2\rightarrow 1\mathbf{yx2}\rightarrow 12yx$, where $y>x>2$.
(We may also apply $12\mathbf{yx}\rightarrow 12xy$ if we wish, but this
isn't necessary.)  This brings us to a permutation in the same
equivalence class, where the 5-factor has been modified to **12*. But
we know that just working within this 5-factor, we can use the $n=5$
case to modify it to the form $12345$ (where the 1 and 2 are actual
values, the others relative values).  In particular, we have moved the
actual 1 two spaces to the left.  Doing this repeatedly gets us to a
permutation beginning with 1, which we have seen in Stage~1 is
equivalent to $\iota_{n}$.

\textit{Case~2: $n=2k$ is even. \/}  
In the even case, we need to describe an additional class of
permutations that not reachable from $\iota_{n}$.  Let $\mathcal{X}_{n}$
consist of all permutations obtainable as follows: Fill the positions in
order $n-1,n,n-3,n-2,n-5,n-4 \ldots 3,4,1,2$, according to the following  
rule. When filling positions of odd index, the smallest available element 
must be chosen; the subsequent selection of an element to place to its 
right is then unconstrained. Thus 1 must be placed in position $n-1$,
and the element placed in position $n$ could be any other number; however, if it
is not 2, then the 2 is immediately placed in position $n-3$; otherwise
$3$ is placed in this position.   
For example, $\mathcal{X}_{4}=\{3412, 2413, 2314\}$ and 
\[
\mathcal{X}_{6}=\left\{
\begin{matrix}
563412 & 562413 & 562314 & 462315 & 452316 \\
463512 & 462513 & 362514 & 362415 & 352416 \\
453612 & 452613 & 352614 & 342615 & 432516 \\
\end{matrix}
 \right\}\,.
\]

The number of permutations in the class $\mathcal{X}_n$ just described is $(n-1)!!$. 
As we will see next, none of them is reachable. However, it is also true
that most of them are not in $\mathcal{A}_n$, and therefore have not been 
included in the enumeration; this is because most of
the permutations in $\mathcal{X}_n$ have the $2$ in position $n-3$, which is a
position of odd index to the left of the $1$. The only permutations in
$\mathcal{X}_n$ which we have counted, and which therefore must be subtracted off,
are the ones where the $2$ is in position $n$, of which there are $(n-3)!!$.

To see that none of the permutations in $\mathcal{X}_n$ is reachable, consider
their $3$-factors. Every $3$-factor centred on a position of odd index
is either a $213$ or a $312$, because the middle element was placed before
either of its neighbours, and was the minimal available element at the 
time it was placed. And every $3$-factor centred on a position of even
index is a $231$, because the elements in positions of odd index, which
are the minimal elements, descend from left to right. Therefore permutations
belonging to $\mathcal{X}_n$ contain \emph{no} factors of form $123$, $132$, or
$321$, and are therefore isolated by the relation, each one being a singleton 
equivalence class. In particular they are not in the equivalence
class of the identity.

Now we have to consider which permutations in $\mathcal{A}_n$ are not in fact
reachable. The proof for odd $n$ almost carries through completely; indeed,
as remarked, it only fails when the element 1 lies in the penultimate
position $n-1$. We have already seen that the permutations belonging
to $\mathcal{X}_n \cap \mathcal{A}_n$ are not reachable; we will show that all
others are. Take any permutation $\pi \not\in \mathcal{X}_n$, but with the
minimal element $1$ placed in position $n-1$. Checking the conditions
from right to left, suppose the element $\pi_{j}=y$ represents
the last time that we were in compliance with the conditions, and suppose
$\pi_{i}=x$ is the first minimal element which has not 
gone where it should go. That is, all odd positions from $j$ to $n-1$ are
occupied by elements which are left-to-right minima, but the smallest
element situtated in positions $1$ through $j-1$ is not in position
$j-2$, as expected, but in position $i$ with value $x$.

As before, all we need to do is show that we can move the element $1$
to the left. This exploits two facts: that $x$ is the minimal element in
a lefthand region, and the righthand region is alternating.

Because the righthand portion of $\pi$, from position $j$ onwards, is
alternating, with every step from an odd to an even position being an
ascent, and every step from even to odd being a descent, we will have
a particular interest in a certain type of $3$-factor beginning in
a position of odd index. Namely, we will refer to a $3$-factor
$\pi_{h},\pi_{h+1},\pi_{h+2}$ as an \emph{odd 321} if 
$\pi_{h}>\pi_{h+1}>\pi_{h+2}$ and if $h$ is odd. Note that an odd $321$
beginning in position $n-3$ is exactly what we need, because either option
for replacing it shifts the element $1$ from position $n-1$. 

First, take the element $x$ and use moves $\rightarrow 321$ to shift 
it rightward, two positions at
a time, until it arrives in position $j-2$ or $j-1$. This is possible
because $x$ is moving through a region in which it is itself the minimal
element.

Now $j-2$ is an odd position, so if $x$ has reached position $j-2$
then positions $j-4,j-3,j-2$ now form an odd $321$. Alternatively, if $x$
has reached position $j-1$ then positions $j-2,j-1,j$ now form an odd $321$,
because the second and third of these positions are occupied by $x$ and $y$
and $y<x$. We will show that we can propagate either of these odd $321$s 
rightward until they capture the smallest element, which can then be moved.

In either case, we have an odd $321$, followed, two positions later, by
an element which is smaller than everything to its left. This gives us,
in other words, a configuration $432-1$, which, filling in the blank,
might actually be (a) a $54231$, (b) a $53241$, or (c) a $43251$. Check that the
following moves are available in each case: 
(a) $54231 \rightarrow 24531$; (b) $53241 \rightarrow 23541$; 
(c) $43251 \rightarrow 23451 \rightarrow 25431$. 

Note that these moves each replace a configuration which begins with
an odd $321$ by one which \emph{ends} with an odd $321$. And, because
of the placement of the left-to-right minima, this new odd $321$ either
terminates with the smallest element $1$, or again has another left-to-right
minimum two positions to its right.

Therefore we can propagate the $321$ rightward until it reaches the smallest
element; therefore we can move the smallest element; therefore the 
permutations not belonging to $\mathcal{X}_n$ are in fact reachable.
  
This completes the missing step in the proof for even $n$. 
\end{proof} 

\begin{theorem}\label{propo:P4bId}

$\#\Eq^\twolines\left(\iota_n,\big\{ \{123,321\} \big\}\right) = \dbinom{n-1}{\lfloor (n-1)/2 \rfloor}$.

\end{theorem}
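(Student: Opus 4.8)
The plan is to identify necessary conditions for membership in $\Eq^\twolines(\iota_n,P_4)$, where $P_4=\big\{\{123,321\}\big\}$, show that they are (up to a handful of exceptions when $n$ is even) also sufficient, and count the resulting set.

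\emph{Step 1: invariants.} Every allowed move turns a monotone $3$-factor $abc$ in positions $i,i+1,i+2$ into $cba$; i.e., it transposes the values in positions $i$ and $i+2$ and fixes position $i+1$. Two consequences follow. First, each move displaces exactly two elements, each by exactly two positions, so the parity of every element's position is conserved, and a $\pi$ reachable from $\iota_n$ must be \emph{parity-admissible}: $\pi(j)\equiv j\pmod 2$ for all $j$. Second, passing from the increasing word $abc$ to the decreasing word $cba$ flips precisely the three inversions internal to those positions and changes no inversion involving an outside position (the multiset of values held there is unchanged), so $\inv$ changes by exactly $\pm3$; hence $\inv(\pi)\equiv 0\pmod 3$ is necessary. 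Write $\mathcal B_n$ for the set of parity-admissible $\pi\in S_n$ with $\inv(\pi)\equiv 0\pmod 3$.

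\emph{Step 2: the exceptional set.} A permutation with no monotone $3$-factor is isolated. For odd $n$ the fully reversed parity-admissible permutation is $\rho_n$, which is all-descending and far from isolated; but for even $n$ this mechanism does produce isolated — hence unreachable — permutations lying in $\mathcal B_n$, the basic example being the block reversal $(n-1)\,n\,(n-3)\,(n-2)\cdots3\,4\,1\,2$ (for $n=4$ this is $3412$, already outside $\mathcal B_4$ because its inversion number is $4$, but for $n=6$ it is $563412\in\mathcal B_6$). So $\Eq^\twolines(\iota_n,P_4)=\mathcal B_n\setminus\mathcal X_n$ for a set $\mathcal X_n$ of ``exceptional'' permutations ($\mathcal X_n=\varnothing$ when $n$ is odd), and part of the work is to pin down $\mathcal X_n$ explicitly — much as the class $\mathcal X_n$ is treated in the proof of Theorem~\ref{thm:P5bId}.

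\emph{Step 3: sufficiency.} For the reverse inclusion I would argue, using reversibility of the moves, in two stages mirroring the proofs of Theorems~\ref{thm:P2bId} and~\ref{thm:P5bId}: (i) an explicit construction showing that every permutation in $\mathcal B_n$ beginning with the value $1$ is reachable from $\iota_n$; and (ii) showing that any $\pi\in\mathcal B_n\setminus\mathcal X_n$ can be moved to one beginning with $1$. For (ii), the value $1$ occupies an odd position and slides two places leftward whenever the $3$-factor immediately to its left is decreasing; when it is not, preprocessing moves escort the value $2$ over to the $1$, or propagate a descending $3$-factor rightward until it captures the $1$ — exactly in the style of the ``fetch the $2$'' and ``odd $321$'' manoeuvres of Theorem~\ref{thm:P5bId} — with parity supplying enough room and the conditions $\inv\equiv0$, $\pi\notin\mathcal X_n$ ruling out a deadlock.

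\emph{Step 4: the count, and the main obstacle.} It remains to show $\#(\mathcal B_n\setminus\mathcal X_n)=\binom{n-1}{\lfloor (n-1)/2 \rfloor}$. Since a parity-admissible permutation is the alternating shuffle of an arrangement of the odd values with an arrangement of the even values, its inversion number splits as (odd-block inversions)$\,+\,$(even-block inversions)$\,+\,$(cross-inversions), so the number with $\inv\equiv0\pmod 3$ is extracted by a roots-of-unity filter applied to $\sum q^{\inv\pi}$ over parity-admissible $\pi$, evaluated at a primitive cube root of unity; alternatively — and probably more transparently — the construction in Step~3 should furnish a bijection between the reachable permutations and $\pm1$ lattice paths of length $n-1$ starting at $0$ and never dipping below it, of which there are classically $\binom{n-1}{\lfloor (n-1)/2 \rfloor}$, the $\mathcal X_n$ correction absorbing the surplus $\#\mathcal B_n-\binom{n-1}{\lfloor (n-1)/2 \rfloor}$ for even $n$. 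As with the other theorems of this section, the main obstacle is Step~3 together with this bookkeeping: the case analysis showing that nothing in $\mathcal B_n\setminus\mathcal X_n$ gets stuck, and the precise identification of $\mathcal X_n$ when $n$ is even.
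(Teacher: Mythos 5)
Your Step 1 invariants are correct: every move preserves the parity of each element's position and changes $\inv$ by $\pm 3$, so $\pi(j)\equiv j\pmod 2$ for all $j$ and $\inv(\pi)\equiv 0\pmod 3$ are indeed necessary. But the heart of your plan --- that these conditions are sufficient except for a set $\mathcal X_n$ of isolated permutations, empty when $n$ is odd --- is false, and this sinks Steps 2--4. Take $n=7$ and $\pi=1674523=1\oplus 563412$. It is parity-admissible, $\inv(\pi)=12\equiv 0\pmod 3$, and it is not isolated (positions $1,2,3$ carry the $123$-factor $1,6,7$); yet it is not reachable from $\iota_7$. Indeed the class of the identity turns out to consist exactly of direct sums of singletons and of \emph{odd-sized} blocks in which the even elements of each block sit on the diagonal and the odd elements form an indecomposable $321$-avoiding pattern; $1674523$ contains an indecomposable block of even size $6$ and so lies outside this class, which one can also confirm by noting that the theorem gives only $\binom{6}{3}=20$ reachable permutations while your $\mathcal B_7$ is strictly larger. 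So ``parity $+$ $\inv\bmod 3$, minus isolated deadlocks'' is the wrong characterisation: there are non-isolated unreachable permutations satisfying both invariants, already for odd $n$. (Your conditions do happen to coincide with the true class for $n\le 6$, which is presumably why the small cases looked encouraging.)

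Consequently Step 3 cannot be completed as described --- no amount of ``fetch the $2$'' or ``odd $321$'' manoeuvring will reach $1674523$ --- and Step 4 has nothing to count: the roots-of-unity filter enumerates $\mathcal B_n$, not the equivalence class, and the proposed lattice-path bijection is not exhibited. The missing idea is structural rather than invariant-based: one must show the class of $\iota_n$ is closed under $123\leftrightarrow 321$ precisely on the set of direct sums of singletons and odd blocks of the form above (a case analysis on how a monotone $3$-factor meets the block decomposition), show every non-identity member of that set contains a $321$-factor so that one can strictly decrease $\inv$ back to $\iota_n$, and then obtain $\binom{n-1}{\lfloor (n-1)/2\rfloor}$ from the convolution identities $O(x)=E(x)C(x)$ and $E(x)=(1+xO(x))C(x)$ relating the Catalan generating function to those of the central binomial coefficients.
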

\begin{proof}
We claim that the permutations in this class are direct sums of singletons
and of blocks of odd size greater than one, where within each
block the even elements (with respect to the block) are on the
diagonal, and the odd elements form an \textit{indecomposable}
$321$-avoiding permutation. 

Let us call the set that we have just described $\mathcal{A}_n$.
Because all the even elements within a block are fixed points of the
permutation, the indecomposability of the odd elements is equivalent to
the indecomposability of the entire block.

First we will show that $\mathcal{A}_n$ is closed under $123
\leftrightarrow 321$; since the identity is in $\mathcal{A}_n$ this will
establish that the equivalence class of the identity is a subset of
$\mathcal{A}_n$. Then we will show that we can return to the identity
from any permutation in $\mathcal{A}_n$, which will establish that the
two sets are identical. Finally we will use generating functions to
enumerate $\mathcal{A}_n$.

Let $\pi$ be an arbitrary permutation belonging to $\mathcal{A}_n$. By
definition, $\pi$ is a direct sum of singleton blocks and of larger
blocks having a specific form. We will call any non-singleton block of
$\pi$ \emph{large}. Unless $\pi$ is the identity, it contains at least
one large block. Note that large blocks always begin with descents: for
if the first element of the block were on the diagonal, we could split
the block immediately after it to obtain a direct sum decomposition;
therefore, the first element is below the diagonal (i.e., is an
excedance) but the second element is on the diagonal. For symmetric
reasons, large blocks end with descents as well.

First we show that any application of $123 \rightarrow 321$ to $\pi$
produces an element $\pi '$ in $\mathcal{A}_n$. Consider the different ways that a 3-factor
$\pi_i,\pi_{i+1},\pi_{i+2}$ of form $123$ might occur within $\pi$.

\textit{Case~(a)} All three elements are in singleton blocks; then the result is the
unique large block of size 3 permissible within elements of
$\mathcal{A}_{n}$, namely 
$\begin{pmatrix}
0&0&1\cr 0&1&0\cr 1&0&0\cr 
\end{pmatrix} $.  

\textit{Case (b)} Exactly two of the elements (necessarily the first two
or the last two) are in singleton blocks. Assume without loss of
generality that it is the last 
two, i.e., $\pi_{i+1}=i+1$ and $\pi_{i+2}=i+2$, and that $\pi_i < i$
belongs to a large block $B$ of size $2k+1$. Since $\pi_{i}$ is the last
element of $B$, it must be an odd element within the
block.  The replacement produces a larger block $B'$ of size $2k+3$.
The $k$ even elements of $B$, along with $\pi_{i+1}$ are diagonal
elements that remain unchanged by the transformation, so all the even
elements of $B'$ lie on the diagonal.  The block $B'$ must be
indecomposable, because any breakpoint before $\pi'_{i}$ would already
have been a breakpoint for $B$ itself, and no breakpoint can occur
thereafter since $\pi'_{i}>\pi'_{i+1}>\pi'_{i+2}$.  

Finally, $\pi'_{i}$ is the largest element of the block, so could only
play the role of ``3'' in a 321 pattern, but there is only one odd
element to its right in the block.  So any 321 pattern of odd elements
in $B'$ that did not already exist in $B$ must use $\pi'_{i+2}$ as ``1''
and odd elements to the left of $\pi'_{i}$ for ``3'' and ``2''.  But
then these elements (which haven't moved) together with $\pi_{i}$ would
have formed a 321 in $B$, which wasn't allowed.

\textit{Case~(c)} Just one element is in a singleton block. 
This can't be the third (or, symmetrically, the first) element, because
if $\pi_i$ and $\pi_{i+1}$ are the final two elements of a large
block, then $\pi_i > \pi_{i+1}$, so our 3-factor is not a 123.  So it must be
$\pi_{i+1}$ which is the singleton, while the other two elements belong
to two large blocks. The replacement merges these three blocks into one;
the even elements, including $\pi_{i+1}$, remain on the diagonal, and as
in the previous case any point at which the new block split would also
imply a decomposition of one of the old blocks at the same
position. The odd elements of $\pi '$ are $321$-avoiding because if a $321$
contained just one of $\pi'_i$ or $\pi'_{i+2}$ then it would be
pre-existing (with $\pi_{i+2}$ or $\pi_{i}$ respectively). If it
contained both, then the third element in the pattern would be either on
the left and too large for the old lefthand block, or on the right and
too small for the old righthand block. 

\textit{Case (d)} The three elements are all within a single large block
$\mathcal{B}$. 

First we claim that the middle element, $\pi_{i+1}$ must be in an even
position (within $\mathcal{B}$).  Otherwise, $\pi_{i}$ and $\pi_{i+2}$
would be in even positions, hence on the diagonal, and the 123 form of
the 3-factor would mean $\pi_{i+1}$ was also on the diagonal; thus,
$\mathcal{B}$ would have to be of size at least 5.  Now if all elements to
the left of $\pi_{i}$ were smaller than it, $\mathcal{B}$ would split into
summands before position $i$.  But if some $\pi_{j}$ is greater than
$\pi_{i}$ (for some odd $j<i$) it
must be greater than $\pi_{i+2}$, forcing a compensatory $\pi_{k} < \pi_{i}$ for
some odd $k>i+2$.  But then $\pi_{j}$, $\pi_{i+1}$, $\pi_{k}$ formed a
321-pattern of odd positions within the block, contrary to hypothesis.
The claim follows.    

Now the replacement $\pi_{i}\pi_{i+1}\pi_{i+2}\rightarrow
\pi_{i+2}\pi_{i+1}\pi_{i}$ cannot create a new direct sum decomposition
since it is increasing the left element and decreasing the right one.
Suppose that somehow this move created a $321$ among the odd elements
(within $\mathcal{B}$).  If it only used one of $\pi_{i}, \pi_{i+2}$,
then it must have been pre-existing with the other one, contrary to
hypothesis.  If it used both, then without loss of generality assume
$\mathcal{B}$ contains an element $x$ to the left of the replaced
3-factor, but larger than $\pi_{i+2}$. Because $x$ is also greater than
$\pi_{i+1}$, it uses up one of the odd values greater than the diagonal
element $\pi_{i+1}$, meaning that there must be a $y$ to the right of
$\pi_{i+1}$ but smaller than it, and then $x,\pi_{i+2},y$ is a
pre-existing $321$.  The case where $\mathcal{B}$ contains an element
$y$ to the right of the replaced 3-factor, but smaller than $\pi_{i}$
follows by symmetry.

\textit{Non-Case (e)} The last possibility to consider is that the
3-factor is split across two adjacent large blocks, necessarily with two
elements at the start or end of one of the large blocks.  But this is
ruled out because large blocks begin and end with descents.

Note that in each of these cases the replacement $123
\rightarrow 321$ winds up gluing together all the blocks of $\pi$ which
it straddles, leaving the same number or fewer blocks in $\pi'$.  In
particular, the replacement may glue together blocks, but never splits
any apart.

Now consider applications of $321 \rightarrow 123$ within a permutation
$\rho \in \mathcal{A}_n$ to obtain a new permutation $\hat{\rho}$. Clearly,
any adjacent $321$ must lie within a single 
block, as in any two blocks, all the elements in the block to the right
are larger than all the elements in the block to the left. Because the
even elements within a block increase monotonically, the $321$ is
composed of odd, even, odd elements. An analysis similar to that given
above shows that any such 
transformation is simply the reverse of one of the cases (a--d)
described above, so $\hat{\rho}$ is always in $\mathcal{A}_{n}$.  

Now we need to show that we can use these transformations to return to
the identity from any permutation $\sigma$ in $\mathcal{A}_n$. We first
claim that every large block of $\sigma$ contains a $321$ as a factor.
For the first element of the block must lie below the diagonal and the
last element must lie above it; therefore, there are two consecutive odd
elements in the block with the first below and the second above the
diagonal. Together with the even element which separates them, and which
lies \emph{on} the diagonal, this forms a $321$.

Unless $\rho $ is itself the identity, it contains a large block, and
therefore a $321$. Replacing this $321$ with a $123$ yields a
permutation $\hat{\rho}$ having strictly fewer inversions than $\rho$. But
as $\mathcal{A}_n$ is closed under such replacements, we know that $\hat{\rho}$
also belongs to $\mathcal{A}_n$, and therefore is either the identity or
else contains a $321$. By iterating this process, we must eventually
arrive at a permutation having no inversions, namely the identity.

This establishes that $\Eq^\twolines\left(\iota_n,\big\{ \{123,321\}
\big\}\right)=\mathcal{A}_{n}$, so all that is left is the enumeration
of these classes.  It is an easy exercise
\cite[(n$^{6}$)]{StanleyCatAdd} or \cite[p.~15]{ClaKit} that the number of \emph{indecomposable} 
$321$-avoiding permutations on $m+1$ elements is the Catalan number
$c_{m}={\frac{1}{m+1}}{{2m}\choose{m}}$.  This is also the number of possible
blocks of size $2m+1$. 
We define the following three generating functions, which enumerate
central binomial coefficients of even order (E), of odd order (O),
and the Catalan numbers (C).  

\begin{eqnarray*}
E(x) & =  \frac{1}{\sqrt{1-4x}} & =  1 + 2x + 6x^2 + 20x^3 + 70x^4 + \dots \\
O(x) & =  \frac{\frac{1}{\sqrt{1-4x}}-1}{2x} & =  1 + 3x + 10x^2 + 35x^3 + 126x^4 + \dots \\
C(x) & =  \frac{1-\sqrt{1-4x}}{2x} & =  1 + x + 2x^2 + 5x^3 + 14x^4 + \dots
\end{eqnarray*}

The statement of the theorem is equivalent
to showing that $E(x)=\sum_{n\geq 0} A_{2n+1}x^{n}$ and $O(x)=\sum_{n\geq
0} A_{2n+2}x^{n}$, where we set $A_{n}:=\#\mathcal{A}_{n}$.  

Now a reachable permutation of even size $2n+2$ is the direct sum of
an indecomposable block of size $2i+1$ ($i\geq 0$) and a reachable
permutation of odd size $2(n-i)+1$. This translates into the
recursion/convolution
\[
A_{2n+2}=\sum_{i=0}^{n}c_{k}A_{2(n-i)+1}
\]

which is equivalent to $O(x)=E(x)C(x)$, and which is also easily verified from
the closed-form expressions for these generating functions.  Similarly,
a reachable permutation of odd-size $2n+1 $ is 
the direct sum of an indecomposable block of size $2i+1$ and a
reachable permutation of even size $2(k-i)$, corresponding to the
easily-verified equality of generating functions $E(x)=(1+xO(x))C(x)$.
This completes the proof.  
\end{proof}

Although the above proof seems natural enough from the structure of the
equivalence class $\mathcal{A}_{n}$, the simple form of the enumeration
as a single binomial coefficient begs the question of whether there is a
more direct (perhaps bijective) argument.  

The next theorem provides independent proofs of two results which
appeared 10 years ago in [CEHKN].  
\begin{theorem}\label{propo:P3bClassInv}
(a) $\Num^\twolines\left(n,\big\{ \{123,132,213\} \big\}\right) = \inv_{n}$, the
number of involutions of order $n$.

(b) $\#\Eq^\twolines\left(\pi,\big\{ \{123,132,213\} \big\}\right)$ is odd for all $n$ and for each $\pi \in S_n$.
\end{theorem}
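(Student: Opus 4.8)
The plan is to reduce both statements to a structural analysis of how the largest letter $n$ (dually, the smallest letter $1$) behaves inside an equivalence class. The fact to pin down first is a convenient reformulation of the moves: a $3$-factor $\pi_i\pi_{i+1}\pi_{i+2}$ lies in the block $\{123,132,213\}$ precisely when $\pi_i<\pi_{i+2}$, and in that case the allowed replacements are exactly the moves that put the median of $\{\pi_i,\pi_{i+1},\pi_{i+2}\}$ into any one of the three positions (the minimum and maximum then fill the remaining two positions, minimum first). In particular $n$ is never the median of a $3$-factor containing it, so it moves only as the larger member of an outer pair; from this one deduces that ``$\pi_1=n$'' is a property of the whole class, that when it holds no move ever disturbs position~$1$, and that when it fails $n$ can always be walked leftward into position~$2$. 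Symmetrically, ``$\pi_n=1$'' is a class property.

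For part~(a) I would establish, for $n\geq 3$, the recursion $\Num^\twolines(n,P_3)=\Num^\twolines(n-1,P_3)+(n-1)\,\Num^\twolines(n-2,P_3)$, which together with $\Num^\twolines(1,P_3)=1$ and $\Num^\twolines(2,P_3)=2$ gives $\Num^\twolines(n,P_3)=\inv_n$. Striking the leading $n$ is a bijection between the classes with $\pi_1=n$ and all classes of $S_{n-1}$, yielding the first term. For the classes with $\pi_1\neq n$, I would attach to each one a ``partner letter'' $c$ for $n$ — read off from the canonical representative in which $n$ sits in position~$2$ — and argue that such a class is determined by $c$ together with an equivalence class of $S_{n-2}$ on the remaining $n-2$ letters; since $c$ ranges over $n-1$ values this yields the second term. (A more structural alternative for (a): build a Chinese-style insertion algorithm sending each permutation to a standard Young tableau that is unchanged by every move, and show it induces a bijection from classes onto all SYT of size $n$; then $\Num^\twolines(n,P_3)=\sum_\lambda f^\lambda=\inv_n$.)

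For part~(b) I would use strong induction on $n$, the cases $n\leq 2$ being trivial since all classes are then singletons. If a class $K\subseteq S_n$ has $\pi_1=n$ throughout, then deleting the leading $n$ is a move-graph isomorphism of $K$ onto a class of $S_{n-1}$, so $|K|$ is odd by induction; the case $\pi_n=1$ is dual. The remaining classes — those in which neither extreme letter is frozen — need a different treatment, since they can be strictly larger than every class of $S_{n-1}$ or $S_{n-2}$, so no size-preserving reduction to a smaller symmetric group is available. For such a class $K$ I would instead exhibit a fixed-point-free involution on $K$ with exactly one exceptional (fixed) permutation removed: roughly, pair $\pi$ with the permutation obtained from $\pi$ by toggling its pattern at one canonically chosen active $3$-factor. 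The delicate point is that the naive choice — the leftmost active $3$-factor — is not stable under this pairing, since a replacement of type $213\to 132$ there can open a new active $3$-factor one position further left; the canonical location must therefore be anchored more robustly (for instance to the position of a particular letter), and checking that this really yields an involution with a single fixed point in each class is the crux.

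I expect the genuine obstacle in both parts to be precisely this non-frozen case: identifying the correct class invariant for the bijection in~(a) and the correct canonical $3$-factor (or other pairing) for the involution in~(b). Everything preceding it — the reformulation of the moves, the freezing of an extreme letter, and the reductions to $S_{n-1}$ — should be routine once the reformulation is in hand.
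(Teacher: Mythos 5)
Your reduction framework is sound and, for part (a), coincides with the paper's: classes with $n$ frozen at the front biject with classes of $S_{n-1}$, and for the remaining ("ordinary") classes one sweeps $n$ into position $2$ and records the partner letter $c$ (the paper's $m$, which is the minimal letter left of $n$ and is visibly a class invariant) together with a class of $S_{n-2}$ on the remaining letters, yielding $\inv_n=\inv_{n-1}+(n-1)\inv_{n-2}$. But the step you defer --- that the $S_{n-2}$-class of ``the word with $n$ and $c$ removed'' is itself invariant under $P_3$-moves, so that the pair $(c,\nu)$ really separates classes --- is the entire content of the proof, not a detail. One must show that applying a move to $\pi$ and then sweeping $n$ to position $2$ lands in the same $S_{n-2}$-class as sweeping first; the paper does this by an explicit case analysis on the relative value of the letter $b$ that $n$ sweeps before it when it reaches the site of the move (four cases, in each of which the two ground states differ by a legal move on the last $n-2$ letters). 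Without this commutation argument you only have a well-defined map from pairs $(c,\nu)$ to classes, i.e.\ an upper bound $\Num^\twolines(n,P_3)\le\inv_n$ via canonical representatives, not equality.

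For part (b) the gap is more serious, because the mechanism you propose is different from the paper's and, as you half-concede, does not work as stated. An ``active'' $3$-factor (one with $\pi_i<\pi_{i+2}$) supports \emph{three} patterns, so the permutations differing only at that site form a triangle, not an edge; ``toggling'' is therefore not a well-defined involution, and no choice of canonical site will fix this by itself --- you would need to designate, per permutation, one of two partners, and prove global consistency, which is essentially a new theorem. The paper avoids pairings entirely: within an ordinary class it takes the forest whose edges are the moves in which $n$ plays the ``3'' and decrease $\pi^{-1}(n)$; every node has either zero or two children (a blue child exists iff a red child does), so every tree rooted at a ground state has odd size, and the number of trees in a component equals the size of an $S_{n-2}$-class (odd by induction on $n-2$), whence the component has odd size. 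Note also that this makes your dual case ``$\pi_n=1$ frozen'' unnecessary --- the paper's dichotomy is simply $\pi_1=n$ versus not --- and that the parity argument is interlocked with the invariance claim from part (a) (it needs the components to be exactly the sets $\mathcal{K}(m,\nu)$), so the two deferred cruxes are really one and the same missing lemma.
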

\begin{proof}
Write each involution in $\tau \in \Inv_{n}\subseteq S_{n}$ canonically
as a product of 1-cycles and 2-cycles, with the elements increasing within each 2-cycle,
and with the cycles in decreasing order of largest element.  Omitting
the parentheses, we view the resulting word $D(\tau)$ as a
permutation.  Let $\mathcal{D}_{n}:=D(\Inv_{n})$ be the image of this map
(which is easily reversible by placing parentheses around the ascents of
$\sigma \in \mathcal{D}_{n}$).  We claim that this is a canonical set of
representatives for the equivalence classes of $S_{n}$ under $P_{3}= \{
\{123,132,213\} \}$ transformations.  

Each permutation $\pi\in S_{n}$ can be transformed to an element of
$\mathcal{D}_{n}$ as follows: 
if $n$ is at the front of $\pi$, it must stay there. (This corresponds
to having $n$ as a fixed point of the involution.) Otherwise, use $123
\rightarrow 132$ and $213 \rightarrow 132$ (at least one of which is
possible at each step) to push $n$ leftward into
position 2, which is as far as it will go.  The element which is thus
pushed into position 1 is the minimal element $m$ which was to the left of
$n$ to begin with. This is because $m$ can never trade
places with $n$ under the given operations, as 1 is left of 3 in all of
123, 132 and 213.  Leaving the leftmost 1-factor $n$ or 2-factor $mn$
fixed, proceed inductively among the remaining elements, at each step
moving the maximal remaining element as far left as possible.  The end
result of this deterministic procedure is a permutation $L(\pi)\in
\mathcal{D}_{n}$.  This shows that the number of $P_{3}$-equivalence
classes is at most $\inv_{n}=\#\mathcal{D}_{n}$.  

To show that they are the same, it remains to show that each $\pi$ can
be transformed to a \emph{unique} member of $\mathcal{D}_{n}$, or
equivalently that it is not possible to move from one member of
$\mathcal{D}_{n}$ to another using $P_{3}$-moves.  
We will prove this by induction on $n$. At the same time we will prove
statement (b) of the theorem. Assume as an induction hypothesis that
both statements have been demonstrated for $n-1$ and $n-2$. It is
straightforward to check the base cases by hand.  For $n=3$ the four
equivalence classes are $P_{3}$ and three singleton classes.  For $n=4$
the classes are $\{1234,1243,1324,2134,\mathbf{1423},1342,2143,3142,2314
\}$, $\{1342, 3124, \mathbf{1432}, 3142, 3214 \}$, $\{4123,
\mathbf{4132}, 4213 \}$, $\{2341, \mathbf{2431}, 3241\}$, and six
singletons: $\mathbf{\{ 2413\}, \{3412\},}$ $\mathbf{\{4312\},}$
$\mathbf{4231}$, $\mathbf{3421}$, $\mathbf{4321}$.  (The
elements in \textbf{bold} are the class representatives within
$\mathcal{D}_{n}$.)  

First note that if the largest element, $n$, is at the front of a
permutation, then it is immobile under $P_{3}$-moves. Thus the
equivalence classes split into two kinds: \emph{special} equivalence
classes, in which $n$ is at the front of each permutation in the class,
and \emph{ordinary} equivalence classes, in which $n$ is never at the
front.  Moreover it is obvious that the special equivalence classes for
$S_n$ correspond exactly to \emph{all} the equivalence classes for
$S_{n-1}$ upon deletion of the first elements; therefore, 
the truth of both (a) and (b) as they apply to the
\emph{special} equivalence classes follows by induction. 

Next we will look at the \emph{ordinary} equivalence classes. For
convenience of exposition, consider a (directed) graph in which the
vertices correspond to the permutations in $S_n$, and there is a blue
(directed) edge from $\pi$ to $\pi'$ if $\pi'$ can be obtained from
$\pi$ by applying $123 \rightarrow 132$, and similarly a red edge for
each $213 \rightarrow 132$, and a green edge for each $123 \rightarrow
213$. A blue edge just corresponds to a green edge followed by a red
one, and indeed the edges always appear in matched sets: the appearance
of a $213$ in a permutation implies an incident green edge pointing in
and a red edge pointing out, and also a blue edge making the chord of
this triangle (and similarly for appearances of $123$ and $132$). The
equivalence classes in which we are interested are the (undirected)
connected components of this graph.  
 
Now consider the forest of rooted trees which one obtains by taking only
those red and blue edges in which the element $n$ plays the role of the
``3''. The roots (i.e., sinks) of these trees are exactly the
permutations in which the $n$ has moved to position $2$, which is as
far left as it will go within an ordinary equivalence class. More generally,
if $\pi_{k}=n$, then $\pi$ lies on level
$k-2$ of the tree. (We can say that it has \emph{energy}
$E(\pi)=k-2=\pi^{-1}(n)-2$.) Note that blue and red edges reduce the
energy by one, while green edges leave it unchanged.  
Each vertex in this forest has either zero or two children, because if it
has a blue child (obtained by travelling backwards along a blue edge)
then it also has a red child, and vice versa.  

Each permutation $\pi$ lies on a unique directed path to the
root of its tree, which we will call the \emph{ground state} of $\pi$,
$g(\pi)$. Note that $g(\pi)_{2}=n$, while $g(\pi)_{1}$ is the smallest
element $m$ to the left of $n$ in $\pi$.
Because each node has either zero or two children, each rooted tree has
an odd number of nodes; indeed all of its level-sums are even except the
zeroth level sum, which corresponds to the root vertex (i.e., ground
state).

Now we will create larger classes as follows: declare two ground states
$\tau$ and $\sigma$ \emph{similar} if $\tau_{1}=\sigma_{1}$ and
$\tau_{3}\dotsb \tau_{n}$ is $P_{3}$-equivalent to $\sigma_{3}\dotsb
\sigma_{n}$ regarded (in the obvious way) as members of $S_{n-2}$.  For
$m\in [n-1]$ and $\nu \in \mathcal{D}_{n-2}$, let 
$\mathcal{K}(m,\nu)$ be the (disjoint!) union of all trees with similar ground
states $\tau $, where $\tau_{1}=m$ and $\tau_{3}\dotsb \tau_{n}$ is
$P_{3}$-equivalent to $\nu$.  Note that this gives us a total of
$(n-1)\inv_{n-2}$ equivalence classes, in agreement with the well-known
recursion: $\inv_{n}=\inv_{n-1}+(n-1)\inv_{n-2}$.  (The special
equivalence classes account for the first summand.)  

We claim that these larger classes $\mathcal{K}(m,\nu)$ are exactly (the
vertex sets of) the connected components of our directed graph; that is,
there are no directed edges in the graph which escape from one class to
another. Once this is shown, then by induction there is a unique member
of the class $\mathcal{D}_{n-2}$ of canonical permutations among the
ground states in a large component, to which we prepend $mn$ to obtain
the unique representative of $\mathcal{K}(m,\nu)$ within
$\mathcal{D}_{n}$.  

Furthermore, each $\mathcal{K}(m,\nu)$ will then be of odd size, because
each rooted tree has odd size, having all level-sums even except the one
corresponding to the ground states, and because the number of rooted
trees in the union is odd by the induction hypothesis for $n-2$.  

So suppose there is an edge (of any colour) from a $\pi \in
\mathcal{K}(m,\nu)$ to $\pi'\in \mathcal{K}(m',\nu')$.  
Since this move does not involve moving the largest element $n$,
$\pi$ and $\pi'$ have the same energy. Our goal is to show that
$m=m'$ and $\nu $ is $P_{3}$-equivalent to $\nu'$.  The former follows
from our earlier description of $m$ as the minimum element lying to the
left of $n$ in $\pi$, because $\pi$ and $\pi'$ have the same set of
elements to the left of $n$.  The latter requires an analysis of the
cases that can arise as $\pi$ and $\pi'$ move towards their ground
states in their respective trees.  

As the $n$ moves leftward through each of the two permutations
(following red and/or blue edges toward their respective ground states)
then it sometimes encounters identical elements and therefore has the
same effect; eventually it encounters the positions where the
difference lies,  having swept before it the
minimal intervening element, $b$. What happens from this point forward
depends on how $\pi$ and $\pi'$ differ, and the relative value of $b$.

To clarify the cases, let the three values where the difference was
applied be $d < f < h$, and designate $b$ by one of $C,E,G$ or $I$
(where $C<d<E<f<G<h<I$), depending on its relative order within the
factor.  For example, at some point along the path
from $\pi$ to $g(\pi)$ we may see a permutation containing the factor
$dfhCn$, while at the same energy level on the path from $\pi'$ to
$g(\pi')$ we see instead $dhfCn$ (having followed a blue edge), the two
permutations being otherwise identical. Advancing the element $n$ three further
steps to the left, we arrive in the first instance at $Cndfh$ and in the second
instance at $Cndhf$; the $n$ then continues forward all the way to
position $2$ (zero energy), making identical moves in each case. The
resulting ground states $g(\pi)$ and $g(\pi')$ differ only by a (blue)
move $dfh \rightarrow dhf$, so $\nu =\nu '$.  

Here is a table of the cases that arise given the four possible relative
values of $b$; blue edges are the composition of green with red.  

\begin{eqnarray*}
Input:  dfhCn  \hspace{5mm}{\color{green}\rightarrow} & fdhCn & {\color{red}\rightarrow}\hspace{5mm}  dhfCn \\
Output:  Cndfh  \hspace{5mm}{\color{green}\rightarrow} & Cnfdh & {\color{red}\rightarrow}\hspace{5mm}  Cndhf \\ 
\\
Input:  dfhEn  \hspace{5mm}{\color{green}\rightarrow} & fdhEn & {\color{red}\rightarrow}\hspace{5mm}  dhfEn \\
Output:  dnEfh  \hspace{5mm}{\color{green}\rightarrow} & dnfEh & {\color{red}\rightarrow}\hspace{5mm}  dnEhf \\
\\
Input:  dfhGn  \hspace{5mm}{\color{green}\rightarrow} & fdhGn & {\color{red}\rightarrow}\hspace{5mm}  dhfGn \\
Output:  dnfGh  \hspace{5mm}{=} & dnfGh & {\color{blue}\rightarrow}\hspace{5mm}  dnfhG \\
\\
Input:  dfhIn  \hspace{5mm}{\color{green}\rightarrow} & fdhIn& {\color{red}\rightarrow}\hspace{5mm}  dhfIn \\
Output:  dnfhI  \hspace{5mm}{=} & dnfhI & {=}\hspace{5mm}  dnfhI \\
\end{eqnarray*}

Examining this table shows that the classes $\mathcal{K}(m,\nu)$
containing $\pi$ and $\mathcal{K}(m',\nu')$ containing $\pi$ and $\pi'$
have $\nu $ $P_{3}$-equivalent to $\nu'$, which completes the proof.
\end{proof}

This result is particularly striking because the
equivalence relation has the same number of classes as Knuth
equivalence, yet the two relations are materially different.  For
example, for $n=3$, the equivalence classes for $P_{K}$ have sizes
1,1,2,2, whereas for $P_{3}=\big\{ \{123,132,213\} \big\}$ the sizes are
1,1,1,3. In fact the authors in \cite{Chinese} show that the corresponding
monoids (plactic and Chinese) share 
the same graded Hilbert series, and they obtain a partial recurrence for the
numbers $\#\Eq^{\twolines}(\iota_{n},P_{3})$.  

\begin{propo}[\cite{Chinese}, Cor. 4.3]\label{propo:chinese}  For $n$ odd, 
$\#\Eq^{\twolines}(\iota_{n},P_{3}) = n\cdot
\#\Eq^{\twolines}(\iota_{n-1},P_{3})$.  
\end{propo}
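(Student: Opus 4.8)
The plan is to leverage the rooted-forest description of the identity class that was developed inside the proof of Theorem~\ref{propo:P3bClassInv}. Write $\mathcal{I}_n := \Eq^{\twolines}(\iota_n,P_3)$. That class is \emph{ordinary} (the letter $n$ is never at the front), so it decomposes as a disjoint union of rooted trees, one per \emph{ground state} --- a permutation in which $n$ has been driven as far left as it can go, namely into position $2$. As recorded there, the ground states of $\mathcal{I}_n$ are precisely the permutations $1\,n\,\sigma$ with $\sigma$ running over $\Eq^{\twolines}(\iota_{n-2},P_3)$ (viewed as a permutation of $\{2,\dots,n-1\}$), and the tree rooted at $1\,n\,\sigma$ is generated by letting $n$ ``bubble'' rightward, every internal vertex having exactly two children corresponding to the two legal ways for $n$ to step past its right neighbour. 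Hence $\#\mathcal{I}_n=\sum_{\sigma}t(\sigma)$, where $t(\sigma)$ is the number of vertices of the tree attached to $1\,n\,\sigma$.

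First I would evaluate $t(\sigma)$: as $n$ bubbles right it either deposits the overtaken letter immediately behind the travelling $1$ or carries the $1$ along with it, and it halts as soon as the letter just left of $n$ exceeds the letter just right of it; tracking these choices gives a closed expression for $t(\sigma)$ in terms of the positions of the descents of $\sigma$, read with the parity of those positions (this parity dependence is exactly what will make odd and even $n$ behave differently). Summing $t(\sigma)$ over $\sigma\in\Eq^{\twolines}(\iota_{n-2},P_3)$, using the same recursive description for that smaller class, yields a relation for $\#\mathcal{I}_n$ in terms of $\#\mathcal{I}_{n-1}$ and $\#\mathcal{I}_{n-2}$. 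The content of the corollary is that for $n$ odd this relation collapses to $\#\mathcal{I}_n=n\cdot\#\mathcal{I}_{n-1}$. The most transparent route to the collapse is to repackage the computation as a bijection $\mathcal{I}_n\;\leftrightarrow\;\{1,\dots,n\}\times\mathcal{I}_{n-1}$, the first coordinate being a statistic that takes all $n$ values (the position of the middle letter $\tfrac{n+1}{2}$ is a natural candidate, since that letter is only well defined, and only unconstrained, for odd $n$) and the second coordinate a permutation of $S_{n-1}$ obtained from $\pi$ by deleting the distinguished letter and standardising.

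The main obstacle is to prove that this pairing is an honest bijection onto the \emph{whole} product --- equivalently, that each $\sigma\in\mathcal{I}_{n-1}$ has \emph{exactly} $n$ preimages in $\mathcal{I}_n$, rather than the $n-1$ that a naive ``insert the largest letter'' count produces (recall that $n$ cannot sit at the front). This is where the hypothesis that $n$ is odd is indispensable: the argument must use a parity-sensitive feature of $\mathcal{I}_n$ --- for instance that the strata of $\mathcal{I}_n$ cut out by the distinguished statistic deviate from $\#\mathcal{I}_{n-1}$ by amounts that cancel in pairs precisely when $n$ is odd. Pinning down this exact count (not merely a one- or two-sided estimate) is the crux; the remaining ingredients --- that the set so described is closed under the moves of $P_3$ and that every one of its members is reachable from $\iota_n$ --- are routine, and can be handled exactly as the analogous closure-and-reachability steps in the proofs of Theorems~\ref{propo:P3bClassInv} and~\ref{propo:P4bId}.
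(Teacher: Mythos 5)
First, a point of comparison: the paper does not prove this proposition at all --- it is imported verbatim from \cite{Chinese} (Cor.~4.3), where it is obtained from the authors' Robinson--Schensted-style analysis of the Chinese monoid. So there is no internal proof to measure you against; your attempt has to stand on its own, and as written it does not. What you have is a plausible research plan whose decisive step is explicitly deferred. You set up the forest decomposition correctly (the ground states of $\Eq^{\twolines}(\iota_n,P_3)$ are indeed the permutations $1\,n\,\sigma$ with $\sigma$ ranging over $\Eq^{\twolines}(\iota_{n-2},P_3)$, and each tree is full binary), but then every quantitative assertion is left unproved: you claim ``a closed expression for $t(\sigma)$ in terms of the positions of the descents of $\sigma$'' without deriving it, and this claim is doubtful as stated --- whether a vertex of the tree branches depends on comparing the element currently carried to the left of $n$ (which is determined by the entire branch history, not just by $\sigma$'s descent set) with the next entry of $\sigma$, so the tree size is not visibly a function of descent positions alone. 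You then pivot to a second, independent strategy (a bijection with $\{1,\dots,n\}\times\Eq^{\twolines}(\iota_{n-1},P_3)$ via the position of the letter $\tfrac{n+1}{2}$), for which you verify nothing: not that deleting that letter lands you in $\Eq^{\twolines}(\iota_{n-1},P_3)$, not that its position is unconstrained, not injectivity or surjectivity. Your own sentence ``Pinning down this exact count \dots\ is the crux'' concedes that the theorem-bearing step is missing.

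Concretely, then, the gap is this: both of your routes reduce the proposition to an exact enumeration (either $\sum_\sigma t(\sigma) = n\sum_\tau t'(\tau)$ over the two forests, or exact $n$-to-$1$-ness of a deletion map), and in neither case do you identify the mechanism that produces the factor $n$ or the reason it fails for even $n$. The closure-and-reachability steps you call ``routine'' are indeed the easy part; the hard part is precisely the count, and invoking ``amounts that cancel in pairs precisely when $n$ is odd'' names the desired conclusion rather than proving it. To salvage this you would need either (i) an honest recursive formula for the tree sizes $t(\sigma)$, proved by induction on the bubbling process and then summed against the induction hypothesis for $n-2$ and $n-1$, or (ii) the explicit bijection, with all four of its required properties checked --- or simply to do what the paper does and cite \cite{Chinese}.
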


The recurrence for $n$ even appears still to be open.  

\section{Doubly adjacent transformations}\label{sec:dbl}

For completeness we include a brief treatment of the situation where
both indices and values are simultaneously constrained to be adjacent.
In this highly constrained situation, the permutations reachable from
the identity are easy to classify and enumerate in all cases.  Since all
the treatments are similar, we can wrap them up in one proposition.

As in the previous section, we have as yet no results related to the
enumeration of equivalence classes.

The statement of this proposition makes use of the \textit{Iverson bracket}\/: [S] is equal
to 1 if the statement S is true, and 0 otherwise.

\begin{propo}\label{propo:P*Id}
$\#\Eq^\oursquare(\iota_n,P_1)$ obeys the recurrence $a(n) = a(n-1) + a(n-2)$ 
with $a_1 = a_2 = 1$. (Fibonacci numbers $F(n)$, \cite[A000045]{OEIS}).

$\#\Eq^\oursquare(\iota_n,P_4)$ obeys the recurrence $a(n) = a(n-1) + a(n-3)$ 
with $a_0 = 0$, $a_1 = a_2 = 1$ (\cite[A000930]{OEIS}).

$\#\Eq^\oursquare(\iota_n,P_3) = F(n+1) - [n$ is even$]$.

$\#\Eq^\oursquare(\iota_n,P_5)$ obeys the recurrence $a(n) = a(n-1) + a(n-2) + a(n-3)$
with $a(0)=a(1)=a(2)=1$ (Tribonacci numbers, \cite[A000213]{OEIS}).

$\#\Eq^\oursquare(\iota_n,P_7) = T(n+2) - [n$ is even$]$,
where $T(n)$ obeys the recurrence 
$T(n) = T(n-1) + T(n-2) + T(n-3)$ with $T(0)=T(1)=0,
T(2)=1$. (Tribonacci numbers (with different initial conditions),
\cite[A000073]{OEIS}).  
\end{propo}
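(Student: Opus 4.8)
The plan is to treat all five cases of Proposition~\ref{propo:P*Id} by the same two-step strategy that was used throughout Section~\ref{sec:adj}: first identify a set of necessary conditions that pin down which permutations can be reached from $\iota_n$ under the doubly-adjacent relation $P^\oursquare$, then show every permutation meeting those conditions is in fact reachable, and finally count the survivors by a transfer-matrix / recursive decomposition argument. The key simplification, which I would emphasise at the outset, is that when both positions \emph{and} values must be consecutive, every legal move acts on a $3$-factor $j,j+1,j+2$ occupying positions $i,i+1,i+2$ for some $i$; a catalyst is never ``fetched'' from far away, so the reachable class has a purely local description. I would set up a single lemma saying that $\Eq^\oursquare(\iota_n,P)$ is exactly the set of permutations that decompose as a direct sum of ``tiles'', where the allowed tiles (and their sizes) depend on $P$: for $P_1=\{123,132\}$ the tiles are the singleton $1$ and the block $132$ (size $2$ acting on three consecutive values, but contributing a length-$2$ ``cost'' in the obvious bijection with compositions into $1$s and $2$s); for $P_4=\{123,321\}$ the tiles are the singleton and the block $321$ of size $3$; for $P_5$ the tiles are the singleton, a size-$2$ tile and a size-$3$ tile; and so on.

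Concretely, I would first prove the structural lemma for $P_4$, since it is the cleanest: under $123\leftrightarrow 321$ with both adjacency constraints, a move takes $j(j+1)(j+2)\leftrightarrow (j+2)(j+1)j$ in three consecutive positions, so the ``blocks of consecutive values in consecutive positions'' that are individually reversed behave like independent tiles; the argument that no move can glue or split tiles is exactly the block-structure argument from the proof of Proposition~\ref{propo:P3revIndec} (the first element of a reversed segment exceeds the last, so decompositions are preserved), adapted to the doubly-constrained setting. This yields that $\Eq^\oursquare(\iota_n,P_4)$ is in bijection with tilings of a $1\times n$ strip by monomers and trominoes, giving $a(n)=a(n-1)+a(n-3)$ with the stated initial conditions, i.e.\ \cite[A000930]{OEIS}. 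For $P_1$ the same method shows the class is counted by tilings with monomers and dominoes — hence the Fibonacci recurrence $a(n)=a(n-1)+a(n-2)$ — and for $P_5=P_1\vee P_4$ the tile set is $\{$monomer, domino, tromino$\}$, giving the Tribonacci recurrence $a(n)=a(n-1)+a(n-2)+a(n-3)$ with $a(0)=a(1)=a(2)=1$.

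The two remaining cases, $P_3=\{123,132,213\}$ and $P_7=\{123,132,213,321\}$, require one extra observation and a correction term. For $P_3$ with both adjacencies imposed, the move $123\leftrightarrow 132$ and $123\leftrightarrow 213$ on three consecutive values in three consecutive positions generate, within such a block, \emph{all} of $S_3$ that is reachable — but a careful check (the $n=3$ base case) shows that $321$ is \emph{not} reachable from $123$, so a size-$3$ block can realise $\{123,132,213,231,312\}$ but not $321$; grouped into tiles the count is again monomers-and-dominoes-like but with a boundary defect. The cleanest route is to show $\Eq^\oursquare(\iota_n,P_3)$ is in bijection with monomer/domino tilings \emph{except} that the configuration whose tiling is ``all dominoes when $n$ is even'' is unreachable (it would require the forbidden $321$ at the far end), giving $\#\Eq^\oursquare(\iota_n,P_3)=F(n+1)-[n\text{ even}]$; and similarly $P_7$ adds the tromino tile back in, so one gets the Tribonacci count $T(n+2)$ minus the same single even-$n$ exceptional configuration, yielding $T(n+2)-[n\text{ even}]$ with $T(0)=T(1)=0$, $T(2)=1$, matching \cite[A000073]{OEIS}. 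The main obstacle I anticipate is precisely the bookkeeping for these two subtraction terms: one must verify that exactly one admissible tiling fails to be reachable (and only when $n$ is even), which means checking that the relevant ``all maximal blocks'' configuration contains a forbidden $321$-type $3$-factor at the right boundary and that no alternative sequence of moves evades it — essentially a small finite computation propagated by induction, of the same flavour as the $\mathcal{X}_n$ analysis in the proof of Theorem~\ref{thm:P5bId}, but much lighter because the moves here are genuinely local.
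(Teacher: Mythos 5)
Your overall strategy is the paper's: characterise $\Eq^\oursquare(\iota_n,P)$ as a set of direct sums of anti-identities $\rho_1,\rho_2,\rho_3$ and count these as compositions of $n$ (monomer/domino/tromino tilings), with an Iverson-bracket correction where one composition is unreachable. The cases $P_4$, $P_3$ and $P_7$ come out right. But in two of the five cases your structural description omits a left-boundary constraint that is essential to the stated initial conditions, and this is a genuine gap rather than bookkeeping. For $P_1$ the class is not ``all monomer/domino tilings'': both patterns $123$ and $132$ fix the leftmost of the three affected positions, so position $1$ always carries the value $1$ and can never be absorbed into a $\rho_2$; the class is the set of direct sums of $\rho_1$ and $\rho_2$ \emph{beginning with} $\rho_1$, which is what cuts the count from $F(n+1)$ down to $F(n)$ (already at $n=2$ your unrestricted description would give $2$, but $21$ is unreachable since $S_2$ admits no moves at all). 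Likewise for $P_5$ the class is the set of direct sums of $\rho_1,\rho_2,\rho_3$ \emph{not beginning with} $\rho_2$ (no pattern in $\{123,132,321\}$ places the middle value of its window in the first position); unrestricted tilings give $a(2)=2$, contradicting the $a(0)=a(1)=a(2)=1$ you assert. In both cases the Fibonacci/tribonacci recurrence still holds, since one appends a block on the right where there is no constraint, but the left-boundary restriction must be proved to justify the initial conditions, and your write-up never does this. Relatedly, your description of the $P_1$ tile as ``the block $132$ \dots contributing a length-$2$ cost'' conflates the move with the block structure; the tiles are $\rho_1=1$ and $\rho_2=21$, and the move $123\to 132$ creates a $\rho_1\oplus\rho_2$.

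There is also an outright error in your $P_3$ discussion: the class of $123$ in $S_3$ under $P_3^\oursquare$ is $\{123,132,213\}$, not $\{123,132,213,231,312\}$ ($231$ and $312$ lie in no block of $P_3$, so no move produces them). That claim is inconsistent with your own, correct, final answer $F(n+1)-[n\ \mathrm{even}]$, which gives $3$ at $n=3$, and with the correct tile set for $P_3$, namely $\rho_1$ and $\rho_2$ only, subject to the single global constraint that at least one $\rho_1$ occurs (equivalently, the all-domino tiling is excluded when $n$ is even). Once the block characterisations are stated correctly, closure under the moves and reachability of every admissible direct sum are the short verifications given in the paper, and the recurrences follow as you indicate.
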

\begin{proof}
We begin by characterizing the various equivalence classes of $\iota_{n}$. In each
case, no element can move any further from its starting position then it
could via a single move.  The resulting classes are subsets of those
layered permutations which are direct sums of anti-identities of
dimensions either 1, 2 or 3, as follows: 

$P_1$ ($123 \leftrightarrow 132$): any direct sum of $\rho_1$ and
$\rho_2$ beginning with $\rho_{1}$; 

$P_4$ ($123 \leftrightarrow 321$): any direct sum of $\rho_1$ and
$\rho_3$; 

$P_3$ ($123 \leftrightarrow 132 \leftrightarrow 213$): any direct sum of
$\rho_1$ and $\rho_2$ including at least one $\rho_1$; 

$P_5$ ($123 \leftrightarrow 132 \leftrightarrow 321$): any direct sum of
$\rho_1$, $\rho_2$ and $\rho_3$ not beginning with $\rho_2$; 

$P_7$ ($123 \leftrightarrow 132 \leftrightarrow 213 \leftrightarrow
321$): any direct sum of $\rho_1$, $\rho_2$, $\rho_3$ with at least one
of odd dimension; 

In each case it is easy to see that the given class remains closed under
application of the appropriate operations. It is also easy in general to
see how to reach a given target permutation from $\iota_{n}$, especially
if we cast the block sizes in the language of regular expressions.  The
notation $\{xy\}$ means a single block of size either $x$ or $y$. An
asterisk following a number means zero or more copies of that number. An
asterisk following a string within $[\quad ]$ (not to be confused with
the Iverson brackets in the statement of the proposition) indicates zero
or more copies of that string.

$P_1$: The block sizes are $1\{12\}*=[12*]*$. Build each string of
blocks of the form $12*$ from right to left. 

$P_4$: The block sizes are $\{13\}*$; build each block freely. 

$P_3$: From any non-identity permutation with block sizes as described,
there is at least one instance of $21$ or $12$, which can be transformed
by one of the rules into a $111$.  The resulting permutation has one
fewer $\rho_{2}$.  Proceed inductively to transform all the $\rho_{2}$'s
(i.e., 2-blocks) to consecutive 1-blocks until the identity is reached.  

$P_5$: The block sizes are $[\{13\}2*]*$. First use $123 \rightarrow 132$ to build
    all the 2-blocks from right to left. Then use $123 \rightarrow 321$
    to place the 3-blocks.

$P_7$: Build the 2-blocks first, as in the case of $P_3$, and then place
the 3-blocks. 

One now verifies all the necessary base cases, as trivially $a_1=1$, $a_2=1$, 
and $a_3=$ the size of the non-singleton block of $P_{j}$.

As for the recurrences, for $n>3$:

$P_1$: $a_n = a_{n-1} + a_{n-2}$, by appending respectively a $\rho_1$ or a $\rho_2$.

$P_4$: $a_n = a_{n-1} + a_{n-3}$, by appending respectively a $\rho_1$ or a $\rho_3$.

$P_5$: $a_n = a_{n-1} + a_{n-2} + a_{n-3}$, by appending $\rho_1$, $\rho_2$ or $\rho_3$.

$P_3$: Count all direct sums of $\rho_1$ and $\rho_2$
(obviously Fibonacci) and then subtract 1 from the even terms to remove
the special case $2*$ (all blocks of size 2). 

$P_7$: Count all direct sums of $\rho_1$, $\rho_2$, $\rho_3$ to get
tribonacci numbers [A000073],
and subtract 1 from the even terms because block structure $2*$ is
disallowed. Alternatively, 
verify the recurrence $a_n = a_{n-2} + U_n$, where $U_n$ is the
$P_{5}$-recurrence [A000213], by
noting that a permutation in $\Eq^\oursquare(\iota_n,P_7)$ is either a $\rho_2$
prepended to a permutation in $\Eq^\oursquare(\iota_{n-2},P_7)$, or else
belongs to $\Eq^\oursquare(\iota_{n},P_5)$. 
\end{proof}

\section{Final Remarks \& Open Questions}\label{sec:open}
Our results in this paper are just a tractable subset of questions that
could be explored within these families of equivalence relations.  We
created the framework to easily allow for a number of extensions.  The
connections with familiar integer sequences, pattern-avoidance in
permutations, and important combinatorial bijections indicate the value
of further work. Possible directions for further study include:

\begin{enumerate}

\item Study the sizes (and characterise if possible) all equivalence
classes $\Eq^\ourstar(\pi,P)$, not just for the case $\pi
=\iota_{n}$. Corresponding to each equivalence relation is the multiset
of sizes of the equivalences classes, perhaps best considered as an
integer partition of $n!$.  Is the study of these of interest? 

\item Allow for more generality among the (set) partition $P$ of $S_{3}$
which defines our relations.  The authors in~\cite{PRW11} allow
substitution of patterns in $S_{3}$ where no element is fixed, but still
restrict to partitions $P$ consisting of exactly one non-singleton block
containing the identity $123$.  Although it seems unwieldy to work with
all $B(6)=203$ possible partitions of $S_{4}$, perhaps a different
restriction that forces greater symmetry among the relations would be
useful.  For example, the Knuth relations $P^{\twolines}_{K}=\big\{ \{
213,231\}, \{132,312\}\big\}$ are closed under reversal and
complementation.

\item Consider relations generated by partitions $P$ of $S_{k}$ for
$k>3$.  Here one definitely needs some conditions to restrict focus to
relations of particular interest, since the Bell number $B(4!)$ is
already far too large to handle all cases.  

\item Study in greater detail the structure of the graphs defined by
these relations.  What can one say about their degree sequences or
diameters?  How many moves are necessary in order to transform a given
$\pi $ to the identity?  

\end{enumerate}

\nocite{*}

\end{document}